\tikzset{mynode/.style={inner sep=2pt,fill,outer sep=0,circle}}
\newtheorem{theorem}{Theorem}[section]
\newtheorem{lemma}{Lemma}[section] 
\newtheorem{corollary}{Corollary}[section] 
\newtheorem{proposition}{Proposition}[section] 
\newtheorem{assumption}{Assumption} 
\newtheorem{remark}{Remark}[section]  
\numberwithin{equation}{section}
\newsavebox\foobox
\newlength{\foodim}
\newcommand{\slantbox}[2][0]{\mbox{%
        \sbox{\foobox}{#2}%
        \foodim=#1\wd\foobox
        \hskip \wd\foobox
        \hskip -0.5\foodim
        \pdfsave
        \pdfsetmatrix{1 0 #1 1}%
        \llap{\usebox{\foobox}}%
        \pdfrestore
        \hskip 0.5\foodim
}}
\def\Laplace{\slantbox[-.45]{$\mathscr{L}$}}
\numberwithin{equation}{section}
\numberwithin{figure}{section}
\newcommand\obullet[1]{\ThisStyle{\ensurestackMath{%
  \stackon[1pt]{\SavedStyle#1}{\SavedStyle\kern.6\LMpt\bullet}}}}
\newcommand\ocirc[1]{\ThisStyle{\ensurestackMath{%
  \stackon[1pt]{\SavedStyle#1}{\SavedStyle\kern.6\LMpt\circ}}}}
\renewcommand{\@maketitle}{%
  \begin{center}
    {\LARGE \bfseries \@title \par}
    \vskip 1em
    {\large \@author}
  \end{center}
}
\title{Functional--Analytic Justification of the Time-Domain Foldy--Lax Approximation for Dispersive Acoustic Media: \\ A Feynman-Diagram Viewpoint}
\author{Arpan Mukherjee\footnote{MSU-BIT SMBU Joint Research Center of Applied Mathematics, Shenzhen MSU-BIT University, Shenzhen, People's Republic of China (arpan.mukherjee@smbu.edu.cn and arpanmath99@alumni.iitm.ac.in).} \ and Mourad Sini\footnote{Radon Institute (RICAM), Austrian Academy of Sciences, Altenbergerstrasse 69, A-4040, Linz, Austria (mourad.sini@oeaw.ac.at). This author is partially supported by the Austrian Science Fund (FWF): P36942.}}
\begin{document}
\maketitle
\begin{abstract}

\bigskip

This work provides a rigorous functional-analytic justification for a time-domain Foldy-Lax framework that describes multiple acoustic scattering by a cluster of dispersive resonators (modeling gas-filled bubbles), explicitly incorporating dispersion via the Minnaert resonance. The model is formulated as a delayed-coupled hyperbolic system for bubble amplitude interactions.
\newline
We combine time-domain integral equations, Laplace transforms, and Hardy-Sobolev space techniques to analyze this system, establishing its unique solvability in anisotropic Hilbert spaces, with solutions expressed as convergent Neumann series of convolution operators. We derive geometric decay of truncation errors for resonant incident waves and quantify the contribution of $N$-th order multi-scattering, showing it scales with \(\varepsilon^{N(1-p)+1}\) (relating bubble radius \(\varepsilon\) and inter-bubble distance scaling as $\varepsilon^p$, $p<1$). This dominates the measurement errors, which are of order $\varepsilon^2$, thereby allowing us to capture fields generated by inter-bubble interactions of order $N<\frac{1}{1-p}$. This provides a quantitative relation between the spectra band width of the source field, the closeness distance between the bubbles and the order $N$ of the relevant interactions between the bubbles.
\newline
Furthermore, a novel connection to Feynman diagrams maps multi-scattering paths to diagrammatic vertices and propagators, simplifying the interpretation of higher-order interactions and kinematic constraints. This framework advances accurate transient wave prediction in dispersive media, with implications for cavitation therapy, seismic imaging, and metamaterial engineering.

\bigskip
    
    \noindent
    \textbf{Keywords.} Foldy-Lax Approximation, Time-Domain Scattering, Dispersive Media, Minnaert Resonance, Delayed Hyperbolic System, Neumann Series, Anisotropic Sobolev Space, Feynman Diagrams, Multiple Scattering.
\end{abstract}

\section{Introduction and statement of the results}
\subsection{Introduction}

Multiple scattering phenomena are ubiquitous across classical physics, as in acoustics, electromagnetics, and elasticity. In acoustics, scattering by arrays of particles (e.g., bubbles, droplets) governs wave propagation in oceanic media, medical ultrasound imaging, and acoustic cavitation therapy—a transformative technique where focused ultrasound induces bubble oscillation to ablate tumors or enhance drug delivery to deep tissues. Electromagnetics relies on multiple scattering for antenna design, metamaterial engineering, medical imaging and radar cross-section analysis, while elasticity use it to characterize wave propagation in composite materials and geological formations (e.g., seismic imaging of oil reservoirs). Historically, these phenomena have been studied in two primary frameworks: time-harmonic (frequency-domain) and time-domain settings. Time-harmonic models, exemplified by the Helmholtz equation, simplify analysis via Fourier transformation but fail to capture transient effects like pulse propagation and wave dispersion—critical for real-world applications such as non-destructive testing (where transient pulses detect microcracks) and cavitation therapy (where bubble response to short ultrasound bursts dictates therapeutic efficacy). Time-domain models, by contrast, preserve temporal dynamics but introduce mathematical complexity due to delayed interactions between scatterers and dispersive material responses.
A cornerstone of multiple scattering theory is the Foldy-Lax approximation, initially developed for isotropic scatterers in quantum mechanics and later adapted to classical physics by Foldy and Lax, see \cite{Foldy} and \cite{Lax} respectively. This approximation reduces the collective scattering problem to a system of coupled equations for individual scatterer amplitudes, accounting for pairwise interactions while retaining computational tractability, see \cite{Martin-1, Martin-2}, for classical multiple scattering foundations.  
\bigskip

There is a large literature on such approximations in the time-harmonic regime. In the recent years, there was a revived interest in deriving such approximation in the time-domain regimes, see \cite{Kachanovska, S-W-Y, S-W}. However, these classical Foldy-Lax models often neglect (or are free from) dispersion—an inherent property of scatterers like gas-filled bubbles, where bulk modulus and mass density scaling induce frequency-dependent wave propagation. In bubbly media, for instance, linearized bubble dynamics (governed by Minnaert resonance) lead to dispersive acoustic wavefields, rendering standard time-harmonic Foldy-Lax models inadequate for transient analysis and limiting their applicability to narrow frequency bands. This gap motivates the need for a time-domain Foldy-Lax framework that rigorously incorporates dispersion, ensuring accurate prediction of transient wave behavior in dispersive media.
\bigskip

Our recent work \cite{Arpan-Sini-JEEQ} addressed this limitation by analyzing a linearized, time-dependent acoustic model for gas-filled bubble clusters and estimating the nearby time-dependent wave field. In addition, in \cite{ Arpan-Sini-SIAP, Arpan-Sini-arxiv}, we derived time-domain effective models for bubbly media, i.e. for clusters of bubbles distributed in 3D domains or 2D surfaces, showing that resonance-induced dispersion manifests as convolution terms in amplitude equations—consistent with our Foldy-Lax system’s delayed couplings. 
\bigskip

The approximating fields provided by the Foldy-Lax approximation incorporate a cascade of fields generated by mutual interactions between the bubbles of different orders. A natural question arises regarding the accuracy of this Foldy-Lax approximation. Namely, under which condition, can we guarantee that the field, measured away from the cluster of the bubbles, is generated after a given $N$ interactions between few of these bubbles? Related to this question, in \cite{a-a-m}, we confirmed that Foldy-Lax approximation is most reliable near scattering resonances (e.g., Minnaert resonance in bubbles). Precisely, we showed, with quantitative estimates, that closer is the used incident frequency to the Minnaert resonant one, larger is the order $N$ of interactions between the bubbles that we can capture from near or far measurement zones. 
\bigskip

This observation validates our focus on dispersive regimes where resonance-driven dispersion dominates.  In the current work, we combine time-domain integral equations, Laplace transform techniques, and functional analysis
(e.g., Hardy spaces, convolution operators), to analyze the delayed-coupled hyperbolic system for bubble amplitudes, modeling the Foldy-Lax system, explicitly accounting for dispersion and inter-bubble time delays. This approach extends the analysis of the accuracy of Foldy-Lax approximation to dispersive settings, enabling rigorous analysis of transient wave scattering—such as the sequence of bubble oscillations. Our core results validate the time-domain Foldy-Lax approximation for dispersive bubbly media and quantify its performance via three key findings: First, we prove that the delayed-coupled hyperbolic system for bubble amplitudes admits a unique solution in anisotropic Hilbert spaces ($H^r_{0,\sigma}$), with the solution expressed as a convergent Neumann series of convolution powers. Second, we derive explicit bounds for the truncation error of the Foldy-Lax series, showing that the remainder term decays geometrically with the number of scattering events $(N)$, i.e. $N$ interactions, when the incident wave’s spectral band width lies within the bubble’s resonance band. Third, we demonstrate that the difference between $N$-th and ($N-1$)-th order scattered fields—corresponding to $N$-fold multi-scattering—scales with $\varepsilon^{N(1-p)+1}$ (where $\varepsilon$ is bubble's radius and $p$ is inter-bubble distance scaling, i.e. the mutual distance between the bubbles is of the order $\varepsilon^p$, with $p<1$), while the error caused in estimating the scattered field, in the far or near zones, is of the order $\varepsilon^2$. As a conclusion, these measured fields are indeed generated  after the $N$-th interaction of the bubbles where $N<\frac{1}{1-p}$ as then $N(1-p)+1<2$. This confirms that higher-order paths contribute meaningfully to transient responses near resonance ( i.e. as soon as the incident field has a non zero spectral band including the Minneart resonance and that the bubbles are close to each other). 
\bigskip

Another novel contribution of our work is the connection between the Foldy-Lax approximation and Feynman diagrams—a powerful tool from quantum field theory (QFT) used to visualize and compute particle interactions. In QFT, Feynman diagrams represent scattering processes as paths of particle propagation (propagators) connected by interaction vertices, with each diagram corresponding to a term in a perturbation series. We map the Foldy-Lax amplitude equations to Feynman diagrams by: (1) interpreting inter-bubble scattering events (where a wave from one bubble induces oscillation in another) as vertices; (2) modeling wave propagation between bubbles as retarded propagators (analogous to Helmholtz-type propagators in QFT); and (3) identifying convolution powers of the scattering kernel as multi-scattering paths (as an example: bubble A → bubble B → bubble C corresponds to a two-vertex diagram). Each diagram path enforces kinematic constraints (e.g., time delay conservation) via Dirac delta functions, corresponding to the role of delta functions in QFT Feynman rules, \cite{Nair, Nelison, Wuethrich}. 
\bigskip

Our contributions can be summarized in three aspects: (1) Functional analytic estimation : We utilize Hardy-Sobolev spaces of analytic operator-valued functions and Laplace transform isometries to prove boundedness of the scattering kernel, ensuring the convergence of the Foldy-Lax perturbation series. This addresses a critical gap in classical models, which often assume series convergence without functional justification. (2) Dispersion-Inclusive Formulation: By incorporating the Minnaert resonance condition (governing bubble oscillation) into the hyperbolic amplitude system, we explicitly model dispersion via frequency-dependent scattering coefficients. (3) Diagrammatic Perturbation Tracking: The Feynman diagram correspondence enables systematic tracking of multi-scattering paths, with each diagram term corresponding to a convolution power of the scattering kernel. This not only simplifies the interpretation of higher-order scattering effects but also provides a quantitative framework to estimate truncation errors, ensuring the Foldy-Lax approximation’s accuracy for transient problems.
\bigskip

\subsection{Problem Setting and the Mathematical Model}

\noindent
We consider a collection $\mathrm{D} = \bigcup\limits_{i=1}^M D_i$ of $M$ connected, bounded $\mathcal{C}^2$-regular inclusions in $\mathbb R^3$, which model acoustic bubbles. The total acoustic wave field $u$, defined as \( u := u^{\textit{sc}} + u^{\textit{in}} \), is governed by the following transmission problem:
    \begin{align}\label{hyperbolic}
        \begin{cases}   
            \kappa^{-1}(x)u_{tt}- \operatorname{div} \rho^{-1}(x)\nabla u = \textcolor{black}{\lambda(t)\delta_{x_0}(x)} & \textit{in}\;  (\mathbb{R}^{3}\setminus \partial\mathrm{D}) \times (0,T),\\
            u\big|_{+} = u\big|_{-}  & \text{on}\; \partial D_i,\\
            \rho_c^{-1}\partial_\nu u\big|_{+} = \rho_{b,i}^{-1} \partial_\nu u\big|_{-} & \text{on} \; \partial D_i, \\
            u(x,0)= u_t(x,0) = 0 & \text{for}\; x \in \mathbb{R}^3,
        \end{cases}
    \end{align}
where $u^\textit{in}$ is the incident wave generated by a point source at $x_0 \notin \overline{D}$, given by
\begin{align}
    u^{\textit{in}}(x, t, x_0) := \rho_c\frac{\lambda\left(t - c_0^{-1} \vert x - x_0 \vert\right)}{4\pi\Vert x - x_0 \Vert}\; \text{with}\ c_0=\sqrt{\frac{\kappa_c}{\rho_c}}
\end{align}
while $u^\textit{sc}:= u-u^{\textit{in}}$ denotes the scattered field. The source term involves a causal signal $\lambda(t)$ and the Dirac delta function $\delta_{x_0}(x)$, centered at \(x_0 \in \mathbb R^3\setminus \overline{D}.\)
\newline
The mass density $\rho$ and bulk modulus $\kappa$ are defined piecewise as follows: 
\begin{align}
    \rho = \rho_b\bm{1}_{\mathrm{D}} + \rho_{c}\bm{1}_{\mathbb{R}^{3}\setminus\overline{\mathrm{D}}},\; \kappa = \kappa_b\bm{1}_{\mathrm{D}} + \kappa_c\bm{1}_{\mathbb{R}^{3}\setminus\overline{\mathrm{D}}}, 
\end{align}
where $\rho_b,\kappa_b$ and $\rho_c,\kappa_c$ are the positive constants representing the mass density and bulk modulus inside the bubbles and the background medium, respectively. Here, $\bm{1}_A$ is the characteristic function of a set $A$ \big($\bm{1}_A(x) =1$ if $x\in A$ and $\bm{1}_A(x)=0$ otherwise\big). The Dirichlet and Neumann traces on $\partial D$ are defined as:
$$u \big|_{\pm}(x,t) := \lim_{h\to 0}u(x\pm h\nu_x,t),\; \partial_\nu u \big|_{\pm}(x,t) := \lim_{h\to 0}\nabla u(x\pm h \nu_x,t)\cdot \nu_x,$$
where $\nu_x$ is the outward normal vector to $\partial\mathrm{D}$ at $x$. The subscripts $+$ and $-$ denote the exterior and interior traces, respectively.
\newline
Each bubble \(D_i\) is described by \(D_i = \varepsilon B_i + z_i\), where \(i = 1, 2, \ldots, M\), \(z_i \in \mathbb R^3\) is the center and \(B_i\) is a uniformly bounded, \(\mathcal{C}^2\) regular domain containing the origin and has a volume on the order of unity, $\operatorname{vol}(B_i) \sim 1$. 
\newline
We define \(d\) as the minimum distance between bubbles, denoted by \(d_{ij} = \operatorname{dist}(D_i, D_j)\) for \(i \neq j\), where \(\operatorname{dist}\) represents the distance function, i.e.,
    \begin{align}
         d := \min_{i,j} d_{ij}.
    \end{align}
Furthermore, we denote \(\varepsilon\) as the maximum diameter among the microbubbles, given by:
    \begin{align}
         \varepsilon := \max_{i} \operatorname{diam}(D_i),
    \end{align}
where \(\operatorname{diam}\) represents the diameter function.
\begin{assumption}\label{as111}
We model the bubbles as gas-filled inclusions. This means, their bulk modulus $\kappa_{b,i}$ and mass density $\rho_{b,i}$ are significantly smaller than those of the surrounding medium. To model this, we design the bubble radius such that the material parameters scale as follows:
    \begin{equation}\label{scale}
        \big[\kappa(x), \rho(x)\big] := 
        \begin{cases}
            [\kappa_c, \rho_{\mathrm{c}}] & \textit{in} \; \mathbb{R}^3 \setminus D_i, \\
            [\kappa_{b,i}, \rho_{b,i}] = \big[\overline{\kappa}_{b,i} \varepsilon^2, \overline{\rho}_{b,i} \varepsilon^2\big] & \textit{in} \; D_i,
        \end{cases}
    \end{equation}
where \(\overline{\kappa}_{b,i}\) and \(\overline{\rho}_{b,i}\) are constant functions independent of \(\varepsilon\). The parameters of the homogeneous background medium, $\kappa_c$ and $\rho_c$, are also constant and independent of $\varepsilon.$ With such scaling, we observe that the speed of propagation in \(\bigcup\limits_{i=1}^MD_i\) and \(\mathbb{R}^{3} \setminus \overline{\bigcup\limits_{i=1}^MD_i}\) is of order 1, i.e., \(\frac{\kappa}{\rho} \sim 1\).    
\end{assumption}
\begin{assumption}\label{as11}
In this paper, we focus on the following regimes for modeling bubbles distributed throughout a three-dimensional bounded $\mathcal{C}^2$-regular domain $\bm{\Omega}$ of unit volume:
\begin{align} \label{regimes}
M \le M_{\max} \quad \text{and} \quad d = \widetilde{d}\ \varepsilon^p,\; 0\le p<1, \quad \varepsilon \ll 1,
\end{align}
where $M_{\max}$ and $\widetilde{d}$ are positive constants independent of $\varepsilon$ with $M_{\operatorname{max}}$ to be specified later.
\end{assumption}

\subsection{Function Spaces and Mathematical Preliminaries}

\noindent
We begin by defining the necessary function spaces needed for our forthcoming analysis following the framework and notations from \cite{A-P, X-A-M,Arpan-Sini-arxiv}. Let $X$ be a separable Hilbert space. We then define \( \mathcal{D}'(\mathbb{R}; X) \) as the space of \( X \)-valued distributions on the real line. Moreover, we define the following space
\[
    \mathcal{L}'(\mathbb{R}, X) := \left\{ f \in \mathcal{D}'(\mathbb{R}, X) : e^{-\sigma t} f \in \mathcal{S}'(\mathbb{R}_+, X) \ \text{for some} \ \sigma > 0 \right\},
\]
where \( \mathcal{S}'(\mathbb{R}_+; X) \) is the space of \( X \)-valued tempered distributions in $\mathbb R$ having support in $[0,\infty).$ Therefore, for a function \( f \in \mathcal{L}'(\mathbb{R}, X) \cap L^1(\mathbb{R}, X) \) and any $\sigma>\sigma_f$ the Laplace transform is given by
\[ 
    \widehat{f}(s) 
    = \Laplace[f](s) 
    := \int_0^{+\infty} f(t) e^{-s t}  dt, 
\]
is well defined and holomorphic on $\Re s > \sigma_f$ with $\sigma_f$ is the abscissa of convergence.  In particular, we fix $\sigma\ge\sigma_0$ with $\sigma_0=\max\{0,\sigma_f\}$, and parametrize $s=\sigma+i\omega$ to evaluate $\widehat f(\sigma+i\omega)$ as the boundary values on the vertical line $\Re s=\sigma$.
\newline
Let \(\bm{\omega}(t)\) be a positive measurable function in $\mathbb{R}^+$. We define the function space
\begin{align}
    L^2(0,\infty,\bm{\omega}(t)dt,X) := \left\{f:[0,\infty)\to X :\int_{0}^\infty \Vert f(t)\Vert^2_X \bm{\omega}(t) dt<\infty\right\},
\end{align}
where \(X\) is a separable Hilbert space.
\newline
Let us now consider \(\bm{\nu}\) to be a positive Borel measure on $[0,\infty)$ satisfying the doubling condition \(R_{\bm{\nu}}:=\sup\limits_{t>0}\frac{\bm{\nu}[0,2t)}{\bm{\nu}[0,t)}<\infty\), and has no atom on $0$ i.e. \(\bm{\nu}(\{0\})=0.\)  We then define the Zen space on the right-half plane as
\begin{align}
    A_{\bm{\nu}}^2(X):= \left\{ F: \mathbb{C}^+\to X\ \text{analytic}: \ \Vert F\Vert^2_{A_{\bm{\nu}}^2(X)} := \int_{\overline{\mathbb{C}^+}} \Vert F(\xi+i\omega)\Vert^2_X d\bm{\nu}(\xi)d\omega<\infty \right\},
\end{align}
where \(s=\xi+i\omega\) with \(\Re s = \xi\ge0\) and \(\omega\in \mathbb{R}.\) We then state the following lemma.
\begin{lemma}\label{l1}
    \cite{A-P} Suppose that \(\bm \omega\) is given as a weighted Laplace transform
    \begin{align}
        \bm \omega(t) = 2\pi \int_0^\infty e^{-2\xi t}d\bm{\nu}(\xi).
    \end{align}
    Then the Laplace transform provides an isometric map \(L^2(0,\infty,\bm{\omega}(t)dt,X) \to A_{\bm{\nu}}^2(X).\)
\end{lemma}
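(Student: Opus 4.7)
The plan is to reduce this to a Plancherel-plus-Fubini calculation. Write $s=\xi+i\omega$ and, for $f\in L^2(0,\infty,\bm\omega(t)\,dt,X)$, set $f_\xi(t):=e^{-\xi t}f(t)\bm 1_{(0,\infty)}(t)$ for each fixed $\xi\ge 0$. The observation that makes everything work is that $\widehat f(\xi+i\omega)$ is nothing but the Fourier transform (in $t\mapsto\omega$) of $f_\xi$, so for each $\xi$ the vector-valued Plancherel identity on $\mathbb R$ will give
\begin{equation*}
\int_{-\infty}^{\infty}\bigl\|\widehat f(\xi+i\omega)\bigr\|_X^2\,d\omega \;=\; 2\pi\int_0^\infty \|f(t)\|_X^2\, e^{-2\xi t}\,dt,
\end{equation*}
provided $f_\xi\in L^2(\mathbb R,X)$. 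One then integrates this in $\xi$ against $d\bm\nu(\xi)$ and swaps the order of integration to recognize the weight as
\begin{equation*}
2\pi\int_0^\infty e^{-2\xi t}\,d\bm\nu(\xi)=\bm\omega(t),
\end{equation*}
yielding $\|\widehat f\|_{A^2_{\bm\nu}(X)}^2=\|f\|_{L^2(0,\infty,\bm\omega\, dt,X)}^2$.

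Concretely, I would carry this out in four steps. First, I would work on a dense subset (say continuous, compactly supported $X$-valued functions, or step functions with values in a dense subset of $X$) for which $\widehat f$ is trivially entire and the pointwise Plancherel identity above is classical; vector-valued Plancherel reduces to the scalar case via a separable orthonormal basis of $X$. Second, I would integrate the Plancherel identity in $\xi$ against $d\bm\nu$ and invoke Tonelli (both integrands are non-negative) to interchange $\int d\bm\nu(\xi)$ with $\int_0^\infty\,dt$, producing the isometric equality using the representation of $\bm\omega$. Third, I would extend the isometry to all of $L^2(0,\infty,\bm\omega\,dt,X)$ by the usual density/Cauchy-sequence argument: the partial Laplace transforms converge in $A^2_{\bm\nu}(X)$, and the limit is the Laplace transform of $f$ in the sense that it coincides with the pointwise Laplace integral wherever the latter is absolutely convergent. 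Fourth, I would verify that the image lies in $A^2_{\bm\nu}(X)$ in the prescribed sense, i.e.\ that the limit is analytic on $\mathbb C^+$; this uses Morera/Cauchy on compact subsets of $\mathbb C^+$ together with the dominated convergence available on sets $\{\xi\ge\xi_0>0\}$, together with the doubling property of $\bm\nu$ to control the weight away from the imaginary axis.

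The main technical hurdle is the bookkeeping around the boundary $\xi=0$. The measure $\bm\nu$ may carry mass arbitrarily close to $0$, so the weight $\bm\omega(t)=2\pi\int_0^\infty e^{-2\xi t}d\bm\nu(\xi)$ need not decay in $t$, and $\widehat f$ is in general only a boundary-valued object on $\{\Re s=0\}$, not a genuine Laplace integral there. I would handle this by first establishing the isometry with the $\bm\nu$-integration restricted to $\{\xi\ge\xi_0>0\}$, for which everything is absolutely convergent and analyticity on $\{\Re s>\xi_0\}$ is automatic; then let $\xi_0\downarrow 0$ and invoke monotone convergence (valid because $\bm\nu(\{0\})=0$, which rules out a stray mass at the boundary) to recover the full identity. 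The doubling hypothesis on $\bm\nu$ is used precisely to ensure that the resulting holomorphic function on $\mathbb C^+$ has $A^2_{\bm\nu}$-controlled boundary behaviour, and hence that the map lands inside $A^2_{\bm\nu}(X)$ and not merely in a larger Hardy-type class. Injectivity of the map is automatic from the norm equality, so the claimed isometry follows.
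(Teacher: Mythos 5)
Your argument is correct, and it is essentially the standard proof of this result (which the paper itself does not prove but imports from the cited reference of Alajyan--Partington; the same isometry goes back to the Zen-space literature of Harper and Jacob--Partington--Pott). The core of your proposal --- identifying $\widehat f(\xi+i\omega)$ with the Fourier transform of $f_\xi(t)=e^{-\xi t}f(t)\mathbf 1_{(0,\infty)}(t)$, applying vector-valued Plancherel for each fixed $\xi$, and then integrating against $d\bm\nu(\xi)$ via Tonelli to reassemble the weight $\bm\omega(t)=2\pi\int_0^\infty e^{-2\xi t}\,d\bm\nu(\xi)$ --- is exactly the right computation, and your handling of the boundary $\xi=0$ by truncating to $\{\xi\ge\xi_0\}$ and using monotone convergence (legitimate since $\bm\nu(\{0\})=0$) is sound. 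One small correction of emphasis: the doubling condition on $\bm\nu$ is not actually needed for the isometry itself --- Plancherel plus Tonelli already yields the norm identity, and analyticity of $\widehat f$ on the relevant half-plane follows from the Cauchy--Schwarz bound $\int_0^\infty\|f(t)\|_X e^{-\xi t}\,dt\le \|f\|_{L^2(\bm\omega\,dt,X)}\bigl(\int_0^\infty e^{-2\xi t}\bm\omega(t)^{-1}\,dt\bigr)^{1/2}$ together with the lower bound $\bm\omega(t)\ge 2\pi e^{-2\xi_0 t}\bm\nu([0,\xi_0])$; the doubling hypothesis is used elsewhere in the Zen-space theory (e.g.\ for comparability of $\bm\omega$ with $\bm\nu$-measures of intervals and for multiplier results), not for this lemma. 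This does not affect the validity of your proof.
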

\noindent
To proceed, we fix $\sigma>0$ and write $s=\sigma+i\omega$. Accordingly, we define the shifted half-plane \(\mathbb{C}^+_{\sigma}:= \{s\in \mathbb{C}: \Re s>\sigma\}\) and the shifted Zen space as follows:
\begin{align}
    A_{\bm{\nu},\sigma}^2(X):= \left\{ F: \mathbb{C}_{\sigma}^+\to X\ \text{analytic}: F_\sigma(s)=F(s+\sigma) \in A_{\bm{\nu}}^2(X) \right\}.
\end{align}
The following corollary is straightforward to prove, so we state it without proof.
\begin{corollary}
    \(A_{\bm{\nu},\sigma}^2(X)\) is a Hilbert space isometrically isomorphic to \(A_{\bm{\nu}}^2(X)\) via the unitary operator \(U: A_{\bm{\nu},\sigma}^2(X)\to A_{\bm{\nu}}^2(X)\) defined by \((UF)(s):=F(s+\sigma)\) with the norm
\begin{align}
    \Vert F\Vert^2_{A_{\bm{\nu},\sigma}^2(X)} = \Vert F_\sigma\Vert^2_{A_{\bm{\nu}}^2(X)}.
\end{align}
\end{corollary}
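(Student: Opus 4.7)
The strategy is direct: the norm on the shifted space $A_{\bm{\nu},\sigma}^2(X)$ is defined to be the $A_{\bm{\nu}}^2(X)$-norm of the pulled-back function $F_\sigma(\cdot)=F(\cdot+\sigma)$, so the claimed operator $U$ is an isometry essentially by construction, and the Hilbert structure on $A_{\bm{\nu},\sigma}^2(X)$ will be obtained by pulling back the one of $A_{\bm{\nu}}^2(X)$ through $U$. The verification therefore splits into four short steps: (i) well-definedness of $U$, (ii) linearity and bijectivity, (iii) isometry of the norms, and (iv) transfer of the Hilbert space structure.

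For (i), given $F\in A_{\bm{\nu},\sigma}^2(X)$ the translate $UF=F_\sigma$ is analytic on $\mathbb{C}^+$ as the composition of the affine biholomorphism $s\mapsto s+\sigma$, which sends $\mathbb{C}^+$ to $\mathbb{C}_\sigma^+$, with the analytic map $F$; moreover $F_\sigma\in A_{\bm{\nu}}^2(X)$ is exactly the defining requirement of the shifted Zen space. Linearity follows pointwise. For bijectivity in (ii), set $(U^{-1}G)(s):=G(s-\sigma)$ for $s\in\mathbb{C}_\sigma^+$; then $U^{-1}G$ is $X$-valued analytic on $\mathbb{C}_\sigma^+$ and its $\sigma$-pullback equals $G\in A_{\bm{\nu}}^2(X)$, so $U^{-1}G\in A_{\bm{\nu},\sigma}^2(X)$, and $U\circ U^{-1}=I$, $U^{-1}\circ U=I$ hold pointwise.

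For (iii), the identity
\begin{align*}
\Vert F\Vert_{A_{\bm{\nu},\sigma}^2(X)}^2 \;=\; \Vert F_\sigma\Vert_{A_{\bm{\nu}}^2(X)}^2 \;=\; \Vert UF\Vert_{A_{\bm{\nu}}^2(X)}^2
\end{align*}
is nothing more than the definition of the shifted norm. For (iv), $A_{\bm{\nu}}^2(X)$ is a Hilbert space (it is a closed subspace of $L^2(\overline{\mathbb{C}^+},d\bm{\nu}\otimes d\omega;X)$: analyticity is preserved under $L^2$-limits thanks to Cauchy's integral formula and the doubling condition on $\bm{\nu}$, a fact compatible with the isometric image described in Lemma~\ref{l1}). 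Pulling back its inner product through $U$,
\begin{align*}
\langle F,G\rangle_{A_{\bm{\nu},\sigma}^2(X)} \;:=\; \langle UF, UG\rangle_{A_{\bm{\nu}}^2(X)} \;=\; \langle F_\sigma, G_\sigma\rangle_{A_{\bm{\nu}}^2(X)},
\end{align*}
endows $A_{\bm{\nu},\sigma}^2(X)$ with a Hilbert space structure inducing the prescribed norm, and $U$ becomes a unitary operator by construction.

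The only conceptual point beyond the manipulation of definitions is the completeness transfer in step (iv), namely that a Cauchy sequence $\{F_n\}\subset A_{\bm{\nu},\sigma}^2(X)$ has $\{F_{n,\sigma}\}$ Cauchy in $A_{\bm{\nu}}^2(X)$, converging to some $G$, and that $F(s):=G(s-\sigma)$ then lies in $A_{\bm{\nu},\sigma}^2(X)$ with $F_n\to F$; this is forced by the isometry together with the Hilbert completeness of $A_{\bm{\nu}}^2(X)$. I do not expect any genuine obstacle: the entire argument is a verification that the affine change of variables $s\mapsto s+\sigma$ implements a unitary transfer between two function spaces whose definitions were tailored to make this the case.
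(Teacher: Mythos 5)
Your proof is correct, and since the paper states this corollary without proof (calling it straightforward), your verification is exactly the intended argument: the isometry is built into the definition of the shifted norm, $U$ is invertible via $G\mapsto G(\cdot-\sigma)$, and the Hilbert structure is pulled back. The only substantive ingredient, the completeness of $A_{\bm{\nu}}^2(X)$ itself (analyticity surviving $L^2$-limits), you correctly identify and attribute to the standard Zen-space theory, which is an acceptable level of detail here.
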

\noindent
We recall the Hardy space:
\begin{align}
 \nonumber & H^p(\mathbb{C}^+,\mathcal{L}(X)) 
 \\ &:= \left\{ F: \mathbb{C}^+\to \mathcal{L}(X)\ \text{is analytic}:  \Vert F\Vert_{H^p(\mathbb{C}^+,\mathcal{L}(X))}
:= \Big(\sup_{\xi>0}\int_{-\infty}^\infty \Vert F(\xi+i\omega)\Vert^p_{\operatorname{op}}d\omega\Big)^{\frac{1}{p}}<\infty \right\},
\end{align}
and in a similar manner, we consider $H^\infty(\mathbb{C}^+,\mathcal{L}(X))$ as set of all bounded and analytic function in $\mathbb{C}^+$ with the norm $\Vert F\Vert_\infty:= \sup\limits_{s\in \mathbb{C}^+}\Vert F(s)\Vert_{\operatorname{op}}.$ Accordingly, we define the Hardy space $H^\infty(\mathbb{C}_{\sigma}^+,\mathcal{L}(X))$ on the shifted half-plane $\mathbb{C}_{\sigma}^+.$ Next, we state the following lemma.
\begin{lemma}\label{l2}
    \cite{A-P} Let \(G\) belong to the Hardy space \(H^\infty(\mathbb{C}^+,\mathcal{L}(X)).\) Then, for \(s\in \mathbb{C}^+\) and \(F\in A_{\bm{\nu}}^2(X),\) the multiplication operator \(M_G\) defined by
    \begin{align*}
        (M_G F)(s) = G(s) F(s)
    \end{align*}
    is bounded on \(A_{\bm{\nu}}^2(X)\) with \(\Vert M_G\Vert\le \Vert G\Vert_\infty.\)
\end{lemma}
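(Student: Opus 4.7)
The plan is to carry out a pointwise operator-norm majorization and then integrate against the Zen measure. First I would verify that $M_G F$ is a bona fide element of $A^2_{\bm\nu}(X)$, which reduces to two checks: analyticity on $\mathbb{C}^+$ and finiteness of the weighted $L^2$-norm. Analyticity of $s\mapsto G(s)F(s)$ is inherited from the joint analyticity of $G:\mathbb{C}^+\to\mathcal{L}(X)$ and $F:\mathbb{C}^+\to X$ combined with the fact that the bounded bilinear evaluation $(T,x)\mapsto Tx$ preserves holomorphy; this can be seen either by multiplying local power-series expansions termwise or through a vector-valued Cauchy integral representation.

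Next I would extract the essential pointwise inequality
\[
\Vert G(s)F(s)\Vert_X \;\le\; \Vert G(s)\Vert_{\mathrm{op}}\,\Vert F(s)\Vert_X \;\le\; \Vert G\Vert_\infty\,\Vert F(s)\Vert_X, \qquad s\in\mathbb{C}^+,
\]
which follows directly from the definition of the operator norm on $\mathcal{L}(X)$ together with the defining supremum of $H^\infty(\mathbb{C}^+,\mathcal{L}(X))$. Squaring this bound and integrating against $d\bm{\nu}(\xi)\,d\omega$ over $\overline{\mathbb{C}^+}$ — noting that the boundary contribution at $\xi=0$ is harmless since $\bm{\nu}(\{0\})=0$ by hypothesis — immediately yields
\[
\Vert M_G F\Vert^2_{A^2_{\bm{\nu}}(X)} \;\le\; \Vert G\Vert^2_\infty\,\Vert F\Vert^2_{A^2_{\bm{\nu}}(X)},
\]
which simultaneously gives boundedness of $M_G$ on $A^2_{\bm{\nu}}(X)$ and the advertised norm estimate $\Vert M_G\Vert\le \Vert G\Vert_\infty$.

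The proof presents no substantive obstacle; the only mild technical point is the measurability of $(\xi,\omega)\mapsto \Vert G(\xi+i\omega)F(\xi+i\omega)\Vert_X$, which follows from the strong continuity of the product on the open half-plane (a consequence of analyticity, so every separable-range version of Pettis/Bochner measurability applies). The lemma is the operator-valued counterpart of the classical scalar multiplier theorem for weighted Hardy-type spaces and requires no subharmonicity or Nevanlinna-type machinery once the analyticity of the product and the pointwise bound have been recognized. The main value of the lemma for later sections is thus conceptual rather than technical: it supplies the mechanism by which operator-valued transfer functions (such as the scattering kernels arising from the Laplace-transformed Foldy–Lax system) act as contractive multipliers on the data space $A^2_{\bm{\nu}}(X)$, enabling the Neumann series estimates announced in the introduction.
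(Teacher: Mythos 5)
Your argument is correct: the pointwise majorization $\Vert G(s)F(s)\Vert_X\le\Vert G(s)\Vert_{\operatorname{op}}\Vert F(s)\Vert_X\le\Vert G\Vert_\infty\Vert F(s)\Vert_X$ followed by squaring and integrating against $d\bm{\nu}(\xi)\,d\omega$ is the standard multiplier argument, and it is essentially the same one the paper itself carries out for the analogous Corollary~\ref{cor1.3} (the lemma itself is stated without proof, citing \cite{A-P}). Your treatment of analyticity of the product, of the boundary line via $\bm{\nu}(\{0\})=0$, and of measurability is all sound, so there is nothing to correct.
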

\noindent
Moreover, if \(F\in H^\infty(\mathbb{C}_{\sigma}^+,\mathcal{L}(X))\), then it is straightforward to see that the multiplication operator \(M_G\) on \(A_{\bm{\nu},\sigma}^2(X)\) is unitarily equivalent to the multiplication operator \(M_{\widetilde{G}}\) on \(A_{\bm{\nu}}^2(X)\) with \(\widetilde{G}(s):= G(s+\sigma).\) Consequently, we have the following corollary.
\begin{corollary}
    The statements of Lemma \ref{l1} and Lemma \ref{l2} hold for the shifted half-plane and thus for the shifted Zen space \(A_{\bm{\nu},\sigma}^2(X).\)
\end{corollary}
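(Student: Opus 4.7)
The plan is to transfer both Lemma~\ref{l1} and Lemma~\ref{l2} to the shifted setting via the unitary isomorphism $U:A^2_{\bm{\nu},\sigma}(X)\to A^2_{\bm{\nu}}(X)$, $(UF)(s):=F(s+\sigma)$, introduced in the preceding corollary. The underlying principle is the well-known intertwining of the Laplace transform with modulation in time, namely $\Laplace[e^{-\sigma t}f](s)=\widehat{f}(s+\sigma)$, which converts a shift on the Laplace side into an exponential weight on the time side.

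For the analog of Lemma~\ref{l1}, I would proceed as follows. Given the weight $\bm{\omega}$ generated by $\bm{\nu}$ as in Lemma~\ref{l1}, define the shifted time weight $\bm{\omega}_\sigma(t):=e^{-2\sigma t}\bm{\omega}(t)$. If $f\in L^2(0,\infty,\bm{\omega}_\sigma(t)dt,X)$, then $g(t):=e^{-\sigma t}f(t)$ lies in $L^2(0,\infty,\bm{\omega}(t)dt,X)$ with identical norm. Applying Lemma~\ref{l1} to $g$ yields $\widehat{g}\in A^2_{\bm{\nu}}(X)$ with $\|\widehat{g}\|_{A^2_{\bm{\nu}}(X)}=\|g\|_{L^2}$. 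Since $\widehat{f}(s)=\widehat{g}(s-\sigma)$, we have $(U\widehat{f})(s)=\widehat{g}(s)$, so $\widehat{f}\in A^2_{\bm{\nu},\sigma}(X)$ with $\|\widehat{f}\|_{A^2_{\bm{\nu},\sigma}(X)}=\|f\|_{L^2}$. This exhibits the Laplace transform as an isometric map $L^2(0,\infty,\bm{\omega}_\sigma(t)dt,X)\to A^2_{\bm{\nu},\sigma}(X)$.

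For the analog of Lemma~\ref{l2}, given $G\in H^\infty(\mathbb{C}^+_\sigma,\mathcal{L}(X))$, set $\widetilde{G}(s):=G(s+\sigma)\in H^\infty(\mathbb{C}^+,\mathcal{L}(X))$; since the shift $s\mapsto s+\sigma$ is a bijection $\mathbb{C}^+\to\mathbb{C}^+_\sigma$, we have $\|\widetilde{G}\|_\infty=\|G\|_\infty$. A direct computation from the definitions shows the intertwining identity $U M_G = M_{\widetilde{G}}\, U$ on $A^2_{\bm{\nu},\sigma}(X)$, whence $M_G = U^{-1}M_{\widetilde{G}}U$ is unitarily equivalent to $M_{\widetilde{G}}$. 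Lemma~\ref{l2} applied to $\widetilde{G}$ on $A^2_{\bm{\nu}}(X)$ then yields $\|M_G\|_{\mathcal{L}(A^2_{\bm{\nu},\sigma}(X))}=\|M_{\widetilde{G}}\|_{\mathcal{L}(A^2_{\bm{\nu}}(X))}\le\|\widetilde{G}\|_\infty=\|G\|_\infty$.

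There is no substantial obstacle in this argument---the heavy analytic content lies in the unshifted lemmas, and the proof is essentially a transfer of structure through a unitary conjugation. The only bookkeeping requires verifying the modulation--shift identity $\Laplace[e^{-\sigma t}f](s)=\widehat{f}(s+\sigma)$ and the intertwining $UM_G=M_{\widetilde{G}}U$, both of which follow immediately from the definitions of $U$ and of the Laplace transform.
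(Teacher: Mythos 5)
Your proposal is correct and follows exactly the route the paper intends: the paper dispenses with a written proof by noting that $M_G$ on $A^2_{\bm{\nu},\sigma}(X)$ is unitarily equivalent to $M_{\widetilde{G}}$ on $A^2_{\bm{\nu}}(X)$ via the shift $U$, which is precisely the intertwining $UM_G=M_{\widetilde{G}}U$ you verify, together with the modulation--shift identity for the Laplace transform that handles the Lemma~\ref{l1} part. Your write-up simply supplies the routine details the paper omits.
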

\noindent
 For $\sigma >0$ and \( r \in \mathbb{Z}_{\ge 0} \), we define the following anisotropic Hilbert space as 
\[
    H_{0,\sigma}^r(0,\infty; X) := \left\{ f: f \in \mathcal{S}'(\mathbb{R}_+, X)\ \text{and}\ \Vert f \Vert^2_{H_{0,\sigma}^r(0,\infty; X)}: \sum\limits_{k=0}^r\int_{0}^{+\infty}  \Vert e^{-\sigma t}\partial_t^k f(\cdot,t) \Vert_X^2  dt < \infty \right\}.
\]
For $r\in \mathbb{R}$, we define the following Hardy–Sobolev space of analytic $X$-valued functions on $\mathbb{C}_\sigma^+$ as:
$$H_{r,\sigma}( X):= \left\{f: \mathbb{C}_\sigma^+\to X\ \text{is analytic}: \Vert f \Vert^2_{H_{r,\sigma}(X)} = \int_{-\infty}^{+\infty} (\sigma^2+\omega^2)^r \Vert f(\sigma+i\omega) \Vert_X^2  d\omega<\infty\right\}.$$
Then $H^\infty(\mathbb{C}_{\sigma}^+,\mathcal{L}(X))$ acts by multiplication on $H_{r,\sigma}$ and the Laplace transform establishes the isometric isomorphism (see \cite[Chapter 3]{sayas}, \cite[Section 3.2]{PM})
$$\Laplace:  H_{0,\sigma}^r(0,\infty; X) \overset{\text{unitary}}{\longrightarrow} H_{r,\sigma}( X).$$
Motivated by Lemma~\ref{l1} and Lemma~\ref{l2}, we state the following corollary.
\begin{corollary}\label{cor1.3}
    Let $G\in H^\infty(\mathbb{C}_{\sigma}^+,\mathcal{L}(X))$ and assume that $G$ extends continuously to the closed half-plane $\overline{\mathbb{C}_\sigma^+}.$ We define the multiplication operator $M_{G}: H_{r,\sigma}\to H_{r,\sigma}$ by
    $$\big(M_{G} F\big)(s):=G(s) F(s).$$
    Then $M_{G}$ is bounded on $H_{r,\sigma}(X)$ and $\Vert M_{G}\Vert_{\mathcal{L}\big(H_{r,\sigma};H_{r,\sigma}\big)}\le \sup\limits_{\Re s=\sigma}\Vert G(s)\Vert_{\operatorname{op}},$ where $\Vert \cdot \Vert_{\operatorname{op}}$ denotes the operator norm on $\mathcal{L}(X).$
\end{corollary}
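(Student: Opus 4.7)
The plan is to reduce the operator-norm bound to a pointwise, boundary-value estimate on the vertical line $\Re s = \sigma$, which is exactly where the norm of $H_{r,\sigma}(X)$ is evaluated. Concretely, the strategy mirrors the argument behind Lemma~\ref{l2}: replace the Zen measure $d\bm{\nu}$ there by the weight $(\sigma^2+\omega^2)^r d\omega$ supported on the single line $\Re s = \sigma$, and use that $H^\infty$-functions act by multiplication with their sup-norm as the operator bound.

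First, I would check that $M_G F$ is well defined and holomorphic in $\mathbb{C}_\sigma^+$: since both $G$ and $F$ are analytic $\mathcal{L}(X)$- and $X$-valued functions on $\mathbb{C}_\sigma^+$ respectively, their pointwise product $s\mapsto G(s)F(s)$ is also analytic (this follows because composition with bounded linear evaluation functionals yields scalar holomorphic functions, which is the standard criterion for vector-valued analyticity). Next, I would use the pointwise operator-norm inequality
\[
\|G(\sigma+i\omega)F(\sigma+i\omega)\|_X \;\le\; \|G(\sigma+i\omega)\|_{\operatorname{op}}\,\|F(\sigma+i\omega)\|_X \qquad \text{for a.e. }\omega\in\mathbb{R},
\]
which is legitimate thanks to the continuous extension of $G$ to $\overline{\mathbb{C}_\sigma^+}$.

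Substituting this into the definition of the $H_{r,\sigma}(X)$-norm and pulling the supremum out of the integral gives
\begin{align*}
\|M_G F\|_{H_{r,\sigma}(X)}^2
&= \int_{-\infty}^{+\infty} (\sigma^2+\omega^2)^r \|G(\sigma+i\omega) F(\sigma+i\omega)\|_X^2\,d\omega \\
&\le \Bigl(\sup_{\Re s=\sigma}\|G(s)\|_{\operatorname{op}}\Bigr)^{2} \int_{-\infty}^{+\infty} (\sigma^2+\omega^2)^r \|F(\sigma+i\omega)\|_X^2\,d\omega \\
&= \Bigl(\sup_{\Re s=\sigma}\|G(s)\|_{\operatorname{op}}\Bigr)^{2} \|F\|_{H_{r,\sigma}(X)}^2.
\end{align*}
Taking square roots yields the claimed bound on $\|M_G\|_{\mathcal{L}(H_{r,\sigma};H_{r,\sigma})}$, so $M_G$ maps $H_{r,\sigma}(X)$ continuously into itself.

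The only genuinely delicate point—and the place where the continuous extension hypothesis is used—is justifying that the essential supremum of $\|G(\sigma+i\omega)\|_{\operatorname{op}}$ along the line $\Re s=\sigma$ controls the multiplier and is comparable to the full $H^\infty$-norm $\|G\|_{\infty}$ on $\mathbb{C}_\sigma^+$. Once continuity up to the boundary is assumed, a standard Phragmén--Lindelöf / maximum-modulus argument applied to each scalar functional $\langle G(\cdot)x,y\rangle$, $x,y\in X$, guarantees that the supremum is attained on $\Re s=\sigma$, so the boundary-line supremum agrees with $\|G\|_{\infty}$; this is really the only non-routine input, and it justifies treating the weighted $L^2$-integral on $\Re s=\sigma$ as if it were the full half-plane norm used in Lemma~\ref{l2}.
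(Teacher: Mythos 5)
Your proposal is correct and follows essentially the same route as the paper's proof: the pointwise operator-norm inequality on the line $\Re s=\sigma$, substitution into the weighted $L^2$ definition of the $H_{r,\sigma}(X)$-norm, extraction of the supremum, and a Phragmén--Lindelöf argument (enabled by the continuous boundary extension) to identify the boundary-line supremum with the half-plane supremum. The additional remarks on vector-valued analyticity of the product are a harmless refinement of what the paper leaves implicit.
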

\begin{proof}
  We begin by observing that the multiplication operator, $M_{G},$ is defined point-wise for all $s=\sigma+i\omega.$ This means that for every $ F\in H_{r,\sigma}$ and every $s=\sigma+i\omega$ point-wise, we have
  \begin{align}
      \Vert M_{G} F(s) \Vert_{X} \le \Vert G(s)\Vert_{\operatorname{op}}\Vert F(s) \Vert_{X} \le \Big(\sup\limits_{\omega\in \mathbb{R}}\Vert G(\sigma+i\omega)\Vert_{\operatorname{op}}\Big) \Vert F \Vert_{X}.
  \end{align}
  Since $G\in H^\infty(\mathbb{C}_{\sigma}^+,\mathcal{L}(X))$ and due to the fact that it admits a continuous extension to the boundary line $\Re s = \sigma,$ we use Phragmén–Lindelöf principle to conclude that
  $$\sup\limits_{\omega\in \mathbb{R}}\Vert G(\sigma+i\omega)\Vert_{\operatorname{op}} = \sup\limits_{\Re s>\sigma}\Vert G(s)\Vert_{\operatorname{op}} = \sup\limits_{\Re s=\sigma}\Vert G(s)\Vert_{\operatorname{op}}.$$
  \newline
  Therefore, from the definition of norm on $H_{r,\sigma},$ we see that
  \begin{align}
    \Vert M_{G} F \Vert_{H_{r,\sigma}}^2 = \int_{-\infty}^\infty |s|^{2r}\big\|G(s) F(s)\big\|^2dw \le \left(\sup_{\Re s=\sigma} \big\|G(s)\big\|_{\operatorname{op}}\right)^2 \Vert  F \Vert_{H_{r,\sigma}}^2.
\end{align}
By taking square roots on both sides, $M_{G}$ is bounded on $H_{r,\sigma}$ and $\Vert M_{G}\Vert_{\mathcal{L}\big(H_{r,\sigma};H_{r,\sigma}\big)}\le \sup\limits_{\Re s=\sigma}\Vert G(s)\Vert_{\operatorname{op}}.$ 
\end{proof}
\noindent
We define the Banach space \( \mathcal{B}([0,T]) \oplus L^1([0,T]) \), where \( \mathcal{B}([0,T]) \) is the space of finite signed Borel measures on \([0,T]\), equipped with the total variation norm \( \Vert\mu\Vert_{TV} = \sup_i \sum |\mu(E_i)| \) taken over all finite partitions \( (E_i) \) of \([0,T]\), and \( L^1([0,T]) \) is the space of Lebesgue integrable functions with the norm \(\displaystyle \Vert f\Vert_1 = \int_0^T |f(t)|\,dt \).
\newline
We then recall the following Helmholtz decomposition (see \cite{D-L, raveski} for details):
\begin{align} \label{decomposition-introduction}
\big(L^2(D)\big)^3 = \mathbb{H}_{0}(\operatorname{div} 0, D) \oplus \mathbb{H}_{0}(\operatorname{curl} 0, D) \oplus \nabla \mathbb{H}_{\mathrm{arm}},
\end{align}
where
\begin{align}
    \begin{cases}
        \mathbb{H}_{0}(\operatorname{div} 0, D) = \big\{ u \in \mathbb{H}(\operatorname{div}, D): \operatorname{div} u = 0 \text{ in } D, \; u \cdot \nu = 0 \text{ on } \partial D \big\}, \\
        \mathbb{H}_{0}(\operatorname{curl} 0, D) = \big\{ u \in \mathbb{H}(\operatorname{curl}, D): \operatorname{curl} u = 0 \text{ in } D, \; u \times \nu = 0 \text{ on } \partial D \big\}, \\
        \nabla \mathbb{H}_{\mathrm{arm}} = \big\{ u \in \big(L^2(D)\big)^3: \exists \varphi \in H^1(D) \text{ with } \Delta \varphi = 0 \text{ and } u = \nabla \varphi \big\}.
    \end{cases}
\end{align}
The magnetization operator is defined by
\[
\mathbb{M}^{(0)}_{B_j}[f](x) := \nabla \int_{B_j} \nabla_y \frac{1}{4\pi |x - y|} \cdot f(y)  dy.
\]
It is well known (see, for instance, \cite{friedmanI}) that \(\mathbb{M}^{(0)}_{B_j}: \nabla \mathbb{H}_{\operatorname{arm}} \rightarrow \nabla \mathbb{H}_{\operatorname{arm}}\) admits a complete orthonormal basis of eigenfunctions, denoted by \(\big(\lambda^{(3)}_{n_j}, e^{(3)}_{n_j}\big)_{n \in \mathbb{N}}\). We define \(\lambda_1^{(3)} := \min\limits_j\max\limits_n \lambda_{n_j}^{(3)}\).

    \subsection{Statement of the Result}\label{Main-results} 

\begin{theorem}\label{maintheorem}   
    Let \(\bm{Y}(t) = (Y_i)_{i=1}^M\) satisfy the following hyperbolic system:
    \begin{align} \label{matrix2}
        \begin{cases}
              \mathcal{A} \frac{d^2}{dt^2}\bm{Y}(t) + \bm{Y}(t) = \mathbcal{F}(t) \quad \text{in } (0, T), \\
              \bm{Y}(0) = \frac{d}{dt} \bm{Y}(0) = 0,
        \end{cases}
    \end{align}
where the operator \(\mathcal{A}: (L^2_r)^M \to (L^2_r)^M\) is defined by
\begin{align}
    \mathcal{A} = \mathcal{A}(t):=
     \begin{pmatrix}
        \omega_M^2 & \dots & q_{1M}\mathcal{T}_{-\tau_{1M}} \\
        \vdots & \ddots & \vdots \\
        q_{M1}\mathcal{T}_{-\tau_{M1}} & \dots & \omega_M^2
    \end{pmatrix},
\end{align}
with \(L^2_r := \{ f \in L^2(-r, T) : f = 0 \text{ in } (-r, 0) \}\), translation operators \(\mathcal{T}_{-\tau_{ij}}\) given by \(\mathcal{T}_{-\tau_{ij}}(f)(t) := f(t - \tau_{ij})\), and \(r := \max\limits_{i \neq j} \tau_{ij}\) where \(\tau_{ij} := c_0^{-1} |z_i - z_j|\). Furthermore, for \(1 \le i \neq j \le M\), we have
\begin{equation}\label{b_j}
q_{ij} := \frac{C_j}{4\pi |z_i - z_j|}, \quad \text{with} \quad C_j := \frac{\rho_c}{\overline{\kappa}_{b,j}} \operatorname{vol}(B_j) \varepsilon + \mathcal{O}(\varepsilon^3).
\end{equation}
Neglecting the error terms, we define the scattering coefficient \(C_j = C^{(0)}_j \varepsilon\), where \(C^{(0)}_j := \frac{\rho_c}{\overline{\kappa}_{b,j}} \operatorname{vol}(B_j)\), and having identical shapes and material properties of the bubbles, let  \(C^{(0)} := C^{(0)}_j\). The constant \( \omega_M^2 = \frac{\rho_c}{2 \overline{\kappa}_{\mathrm{b},j}} \Lambda_{\partial B_j} \) is the inverse square of the Minnaert frequency, where
\(\displaystyle\Lambda_{\partial B_j} := \frac{1}{|\partial B_j|} \int_{\partial B_j} \int_{\partial B_j} \frac{(x - y) \cdot \nu_{x}}{|x - y|}  d\sigma_{x}  d\sigma_{y}\) is a geometric constant. In addition, we denote $\mathbcal{F}:=\big(\mathcal{F}_1,\mathcal{F}_2,\ldots,\mathcal{F}_M\big)^{\textit{Tr}}$ with $\mathcal{F}_i:=\partial_t^2 u^{\textit{in}}(z_i,t)$ for $i=1,2,\ldots,M,$ where each $\mathcal{F_i}$ is causal, supported in $[0,T],$ and belongs to $H^r_{0,\sigma}\big(0,T;L^2_{\operatorname{loc}}(\mathbb{R}^3)\big)$ with $r\ge 10$ and $\sigma>\sigma_0.$

\smallskip

\noindent
\begin{enumerate}
    \item For \( \Re s\ge\sigma_0 > \omega_M^{-1} \), under the conditions
\begin{align}
    \varepsilon \cdot \max_{1 \leq i \leq M} \sum_{j \neq i} \frac{C^{(0)}_j}{4\pi |z_i - z_j|} &< \omega_M^2 \left(1 - \frac{1}{\omega_M^2 \sigma_0^2}\right),
\end{align}
the solution \(\bm{Y}(t)\) has the time-domain representation
\begin{align}
    \bm{Y}(t) = \sum_{n=0}^\infty (-1)^n K^{*n} * V(t).
\end{align}
Here, the convolution powers \(K^{*n}\) are well-defined as elements of the Banach algebra 
\newline
\(\left( \mathcal{B}([0,T]) \oplus L^1([0,T]) \right)^{M \times M}\), and their Laplace transform satisfy \(\mathcal{L}[K^{*n}] = T(s)^n\), where each scalar entry of the matrix \(T(s)\) is given by \(T_{ij}(s) = \frac{s^2}{\omega_M^2 s^2 + 1} q_{ij} e^{-s \tau_{ij}}\), which is analytic for \(\Re s \ge \sigma_0\). The vector \(V(t)\) has scalar entries \(V_i(t)\), for \(i = 1, 2, \dots, M\), defined by
\(\displaystyle V_i(t) = \frac{1}{\omega_M^2} \mathcal{F}_i(t) - \frac{1}{\omega_M^3} \int_0^t \sin\left( \frac{t - \tau}{\omega_M} \right) \mathcal{F}_i(\tau)  d\tau.\)
\newline
For any fixed $N\in \mathbb{N},$ we define the remainder term
\begin{align}
    R_N(t) := \sum_{n=N+1}^\infty (-1)^n K^{*n} * V(t).
\end{align}
Then, under the Assumptions \ref{as111} and \ref{as11}, we have the estimate
\begin{align}\label{d}
    \|R_N\|_{H^r_{0,\sigma}} \leq \frac{\alpha_\infty^{N+1}}{1-\alpha_\infty \varepsilon^{1-p}}\varepsilon^{(N+1)(1-p)}\frac{\sigma_0^2}{\omega_M^2\sigma_0^2-1}\|\mathbcal{F}\|_{H^r_{0,\sigma}},
\end{align}
\begin{align}
    \text{where}\; \alpha_\infty\leq \frac{\sigma_0^2}{\widetilde{d}\big(\omega_M^2\sigma_0^2-1\big)}(M-1)C^{(0)}.
\end{align}
\item Moreover, the corresponding scattered field, $u^{\textit{sc}}:= u-u^{\textit{in}},$ satisfy
\begin{align}\label{e}
    \|u^{\textit{sc}}(x,t)-u^{\textit{sc},N}(x,t)\|_{H^r_{0,\sigma}}
    \nonumber&\lesssim M\varepsilon^2.
\end{align}
where $d_{\operatorname{min}} := \min\limits_m |x - z_m| > 0$ (fixed, since \(x \notin \overline{D}\)) is the minimal observation distance, and \(u^{\mathrm{sc},N}\) is the scattered field after \(N\) interactions between the bubbles, defined by
\begin{align}
    u^{\textit{sc},N}(x,t)
    =-\sum_{m=1}^M\frac{C_m}{4\pi|x-\mathbf{z}_m|}
    \sum_{n=0}^N (-1)^n
    (K^{*n}*V)_m\!\big(t-c_0^{-1}|x-\mathbf{z}_m|\big).
\end{align}
\begin{figure}[H]
    \centering
\begin{tikzpicture}[
  >=Latex,
  sphere/.style={
    circle, draw, very thick, minimum size=11mm, inner sep=0pt,
    shading=ball, ball color=gray!25
  },
  sphereGhost/.style={
    circle, draw=gray!50, minimum size=10mm, inner sep=0pt,
    shading=ball, ball color=gray!10
  },
  pathedge/.style={line width=1.2pt, -{Latex[length=2mm]}, red},
  otheredge/.style={-{Latex[length=1.6mm]}, draw=gray!50},
  note/.style={rectangle, rounded corners, draw, fill=yellow!10, font=\small, inner sep=3pt}
]

\def\nodelist{
  0/0.0/0.0,
  1/1.6/1.4,
  2/3.4/0.2,
  3/5/1.9,
  4/7.0/0.6,
  5/9/1.5
}
\def\N{5}

\foreach \k/\x/\y in \nodelist {
  \node[sphere] (i\k) at (\x,\y) {\(\scriptstyle i_{\k}\)};
}

\def\cx{0.7} \def\cy{0.6}
\foreach \k in {0,...,20}{
  \ifnum\k<\N
    \pgfmathtruncatemacro{\kp}{\k+1}
    \draw[pathedge]
      (i\k) .. controls ($(i\k)+(\cx,\cy)$) and ($(i\kp)+(-\cx,-\cy)$) ..
      node[midway, above, sloped, yshift=2pt, font=\scriptsize, text=red]
        {$q_{i_\k,i_{\kp}}$}
      node[midway, below, sloped, yshift=-2pt, font=\scriptsize, text=red]
        {$\tau_{i_\k,i_{\kp}}$}
      (i\kp);
  \fi
}

\node[sphereGhost, above right=8mm and 6mm of i1] (m) {\(\scriptstyle m\)};
\draw[otheredge] (i1) to[bend left=18] node[above, font=\scriptsize, midway] {$q_{i_1,m}$} (m);
\draw[otheredge] (m) to[bend left=24] (i3);

\node[sphereGhost, below right=10mm and -4mm of i2] (ell) {\(\scriptstyle \ell\)};
\draw[otheredge] (i2) to[bend right=16] (ell);
\draw[otheredge] (ell) to[bend right=22] (i4);

\node[below=6mm of i0] (Vstart) {$V_{i_0}$};
\draw[->, thick] (Vstart) -- (i0.south);

\node[note, above=12mm of i\N, align=left] (endnote) {Examine the $i_N$-th component of $K^{*N}*V$\\ to isolate this path's contribution.};
\draw[->] (endnote.south) -- ++(0,-2mm) -| (i\N.north);

\node[note, align=left] (Qnote) at ($(i2)!0.55!(i3)+(0,4)$) {$
  \displaystyle Q_\gamma:=\prod_{k=0}^{N-1} q_{i_k,i_{k+1}};\quad \tau_\gamma := \sum\limits_{k=0}^{N-1} \tau_{i_k, i_{k+1}}
$};
\draw[->] (Qnote.south) to[bend right=12] ($(i2)!0.5!(i3)$);

\node[below=20mm of i2, align=center, font=\footnotesize] {
  Selected path $\gamma=(i_0,i_1,\ldots,i_N)$ (red) among many possible scattering paths (gray).
};

\end{tikzpicture}
\end{figure}
\noindent
We fix a path of indices $\gamma = (i_0, i_1, \ldots, i_N)$ of length $N$ and denote the product of the scattering coefficients along this path as $Q_\gamma := \prod\limits_{k=0}^{N-1} q_{i_k, i_{k+1}} .$ Corresponding to the path $\gamma$, we assume there exist $p\in (0,1)$ and constants $d_{i_k i_{k+1}}^{(0)}>0,$ independent on $\varepsilon,$ such that $|z_{i_k} - z_{i_{k+1}}| = d_{i_k i_{k+1}}^{(0)} \varepsilon^p$ for $k=0,1,\ldots,N-1.$
\newline
\item Furthermore, assume that $V_{i_0}\in H_{0,\sigma}^r (\Re s\ge\sigma_0>\frac{1}{\omega_M})$ has spectral energy in the resonance band $\Omega_{\operatorname{res}} := [\omega_{\operatorname{res}} - h, \omega_{\operatorname{res}} + h]$ such that its Laplace transform \footnote{Here, $V_{i_0}(\cdot)$ has Laplace transform $\widehat{V}_{i_0}(s) = \frac{s^2}{\omega_M^2  s^2+1}\widehat{\mathbcal{F}}(s)\; \text{with}\ \Re s\ge\sigma_0 >\frac{1}{\omega_M}.$
We also observe that for the resonant angular frequency $\omega_{\operatorname{res}}:= \frac{1}{\omega_M}$, the denominator of the term $\widehat{V}_{i_0}(\cdot)$ vanishes on the imaginary axis $s=\pm i \omega_{\operatorname{res}}$.} satisfies
\begin{align}
|\widehat{V}_{i_0}(\sigma + i \omega)| \ge m \quad \text{for a.e.}\ \omega \in \Omega_{\operatorname{res}},
\end{align}
with constants $m, h > 0$, where $\omega_{\operatorname{res}} := 1/\omega_M$. Then, under the condition
\begin{align}
    p>1-\frac{1}{N},
\end{align}
and we choose the parameters $m$ and $h$ corresponding to the resonance band such that for $\theta \in (0,1)$
\begin{align}
    m\sqrt{2h}>(1-\theta)\big \Vert \mathbcal{F}(z_i,\cdot)\big\Vert_{H^r_{0,\sigma}} \dfrac{d_{\operatorname{max}}e^{\sigma\big(\tau_{\gamma^*}+c_0^{-1}(d_{\operatorname{max}}-d_{\operatorname{min}})\big)}}{d_{\operatorname{min}}Q^{(0)}_{\gamma^*} r_{\operatorname{min}}}\Big(\frac{C^{(0)}_j}{\widetilde{d}}\Big)^{N+1} \omega_M^{2N}\Big(\dfrac{(M-1)\sigma_0^2}{\omega_M^2\sigma_0^2-1}\Big)^{N+2},
\end{align}
the following estimate holds for $r\in \mathbb{Z}_{\ge 0}$:
\begin{align}
    \|u^{\textit{sc},N}(x,t)-u^{\textit{sc},N-1}(x,t)\|_{H^r_{0,\sigma}}
\ge (L^*-(M-1)B)\varepsilon^{N(1-p)+1} \gg M\varepsilon^2.
\end{align}
Here, we denote $L_\gamma := \Bigg( \frac{Q^{(0)}_\gamma C^{(0)}}{4\pi d_{\operatorname{max}}\omega_M^{2N}}\Bigg) e^{-\sigma\big(\tau_\gamma+c_0^{-1}d_{\operatorname{max}}\big)} m \sqrt{2h}\ r_{\operatorname{min}},$
and 
\newline
$B:= \frac{C^{(0)}}{4\pi d_{\operatorname{min}}}\frac{\sigma_0^2}{\omega_M^2\sigma_0^2-1} \alpha_\infty^{N+1}e^{-\sigma c_0^{-1}d_{\operatorname{min}}}\|\mathbcal{F}\|_{H_{0,\sigma}^r},$ choosing any maximizer $\gamma^*$ such that $\gamma^* \in \arg \max\limits_{\gamma\in \Gamma_N} L_\gamma,$ with $L_{\gamma^*}= L^*$. Here, $Q_\gamma^{(0)}$ is a positive geometric constants independent of $\varepsilon$, $\tau_\gamma := \sum\limits_{k=0}^{N-1} \tau_{i_k, i_{k+1}}$, $r_{\operatorname{min}}:= \big(\min\limits_{\omega\in \Omega_{\operatorname{res}}}(\sigma^2+\omega^2)^r\big)^\frac{1}{2}$, $d_{\operatorname{max}} := \max\limits_m |x - z_m| > 0$ (fixed, since \(x \notin \overline{D}\)) is the maximal observation distance, and $\Gamma_N$ as the collection of all such ordered paths of length $N.$ 
\end{enumerate}
\end{theorem}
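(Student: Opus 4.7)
The plan is to carry the entire analysis to the Laplace side, where the delayed couplings become exponential symbols and the hyperbolic system reduces to matrix multiplication. Taking the Laplace transform of $\mathcal{A}\bm{Y}''+\bm{Y}=\mathbcal{F}$ with zero initial data and factoring out the diagonal block $(\omega_M^2 s^2+1)$, the system rewrites as $(I+T(s))\widehat{\bm{Y}}(s)=\widehat{V}(s)$ with off-diagonal entries $T_{ij}(s)=\frac{s^2}{\omega_M^2 s^2+1}q_{ij}e^{-s\tau_{ij}}$ and $\widehat{V}(s)=(\omega_M^2 s^2+1)^{-1}\widehat{\mathbcal{F}}(s)$. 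Since $\sigma_0>\omega_M^{-1}$ keeps the line $\Re s=\sigma$ strictly to the right of the resonance poles $s=\pm i/\omega_M$, the rational symbol $s^2/(\omega_M^2 s^2+1)$ is bounded on $\Re s\ge\sigma_0$ by $\sigma_0^2/(\omega_M^2\sigma_0^2-1)$, and the delays give $|e^{-s\tau_{ij}}|\le 1$; hence each $T_{ij}$ lies in $H^\infty(\mathbb{C}_\sigma^+)$, and Corollary~\ref{cor1.3} lifts $T$ to a bounded multiplication operator on $H_{r,\sigma}(\mathbb{C}^M)$. A row-sum (Gershgorin) estimate combined with the scaling $q_{ij}\sim C^{(0)}\varepsilon^{1-p}/(4\pi\widetilde{d})$ from Assumption~\ref{as11} produces $\sup_{\Re s=\sigma}\|T(s)\|_{\mathrm{op}}\le\alpha_\infty\varepsilon^{1-p}<1$ under the smallness hypothesis, so $(I+T(s))^{-1}=\sum_{n\ge 0}(-1)^nT(s)^n$ converges uniformly. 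Inverting the Laplace transform term-by-term via Lemma~\ref{l1} gives $\bm{Y}=\sum_{n\ge 0}(-1)^nK^{*n}*V$; the kernels $K^{*n}$ belong to the stated convolution Banach algebra because the partial-fraction decomposition $\frac{s^2}{\omega_M^2 s^2+1}=\omega_M^{-2}-\omega_M^{-4}(s^2+\omega_M^{-2})^{-1}$ identifies each $T_{ij}$ as the Laplace transform of a Dirac mass plus a sinusoidal $L^1$ tail, all translated by $\tau_{ij}$. Summing the geometric tail past $n=N$ and inserting the $H_{r,\sigma}$-bound on $\widehat{V}$ (controlled by $\|\mathbcal{F}\|_{H^r_{0,\sigma}}$ through the diagonal factor) yields the remainder estimate~\eqref{d}.

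For Part~2, I would substitute the Lippmann--Schwinger representation of $u^{\textit{sc}}$ and reduce the interior densities on each $D_m$ to the scalar amplitude $Y_m(t)$ via the Helmholtz decomposition~\eqref{decomposition-introduction} and the leading eigenmode analysis of $\mathbb{M}^{(0)}_{B_j}$. Taylor-expanding the retarded Green kernel about each center $z_m$ then produces the point-source contribution $-\frac{C_m}{4\pi|x-z_m|}Y_m\!\big(t-c_0^{-1}|x-z_m|\big)$ up to an $\mathcal{O}(\varepsilon^2)$ error per bubble (one power of $\varepsilon$ from $|y-z_m|\lesssim\varepsilon$ inside the inclusion and one already baked into $C_m\sim\varepsilon$). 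Summing over the $M$ bubbles delivers the $M\varepsilon^2$ modelling bound; the Part~1 truncation error, transported through the $x$-dependent retardation via an $e^{-\sigma c_0^{-1}|x-z_m|}$ factor on the Hardy--Sobolev side, is geometrically smaller and absorbed into $M\varepsilon^2$.

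For Part~3, the strategy is to isolate a single coherent path $\gamma^*$ and bound its contribution from below while controlling the remaining terms from above. The $(i_N)$-entry of $K^{*N}*V$ decomposes as $\sum_{\gamma\in\Gamma_N}Q_\gamma V_{i_0}(\cdot-\tau_\gamma)$, so $u^{\textit{sc},N}-u^{\textit{sc},N-1}$ contains the leading summand $(-1)^N\frac{C_{i_N}}{4\pi|x-z_{i_N}|}Q_{\gamma^*}V_{i_0}\!\big(t-\tau_{\gamma^*}-c_0^{-1}|x-z_{i_N}|\big)$. Applying Plancherel on the Hardy--Sobolev side, restricting the integral to $\Omega_{\operatorname{res}}$, and using $|\widehat{V}_{i_0}(\sigma+i\omega)|\ge m$ there gives the lower bound $\|V_{i_0}(\cdot-\tau_{\gamma^*})\|_{H^r_{0,\sigma}}\ge e^{-\sigma\tau_{\gamma^*}}r_{\operatorname{min}}\,m\sqrt{2h}$. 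Multiplying by $Q_{\gamma^*}=Q^{(0)}_{\gamma^*}\varepsilon^{N(1-p)}$ and by the exterior emission factor $C_{i_N}/(4\pi|x-z_{i_N}|)\sim\varepsilon/d_{\operatorname{max}}$ produces the principal term $L^*\varepsilon^{N(1-p)+1}$. The remaining $(M-1)$ paths and the Part~1 tail are bounded above by $(M-1)B$ using Corollary~\ref{cor1.3}, and the reverse triangle inequality closes the lower bound, which dominates $M\varepsilon^2$ precisely when $p>1-1/N$ (so that $N(1-p)+1<2$) and the stipulated choice of $m\sqrt{2h}$ holds.

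The main obstacle is this last lower bound: one must extract a single coherent scattering path from an $M^N$-term convolution sum and argue that destructive interference from the other paths cannot cancel the leading order in the $H_{r,\sigma}$-norm. The resonance hypothesis $|\widehat{V}_{i_0}|\ge m$ on $\Omega_{\operatorname{res}}$ is the essential mechanism: it localises the Plancherel integral to a frequency band surrounding the Minnaert pole $s=i/\omega_M$, where the dispersive symbol $s^2/(\omega_M^2 s^2+1)$ is amplified, and thereby prevents the principal-path contribution from being washed out by cross-terms or by the modelling error.
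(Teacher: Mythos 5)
Your proposal follows essentially the same route as the paper: pass to the Laplace side on the half-plane $\Re s\ge\sigma_0>\omega_M^{-1}$, bound the dispersive symbol by $\sigma_0^2/(\omega_M^2\sigma_0^2-1)$ and use the row-sum norm with $q_{ij}\asymp \varepsilon^{1-p}C^{(0)}/(4\pi\widetilde d)$ to get uniform convergence of the Neumann series, identify $K$ via partial fractions as a Dirac mass plus an $L^1$ sinusoid so that $K^{*n}$ lives in $\big(\mathcal{B}\oplus L^1\big)^{M\times M}$, sum the geometric tail for $R_N$, and for Part~3 isolate one path, lower-bound it by Plancherel on the resonance band, and control the other paths by the reverse triangle inequality. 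The only deviations are cosmetic: you propose to re-derive the point-scatterer expansion of $u^{\textit{sc}}$ from the Lippmann--Schwinger equation, whereas the paper imports it wholesale as Proposition~\ref{main-prop}, and your $\widehat V=(\omega_M^2s^2+1)^{-1}\widehat{\mathbcal{F}}$ drops the $s^2$ factor that the theorem's stated time-domain formula for $V_i$ (and the resonance argument in Part~3) actually uses.
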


\subsection{Feynman-diagram viewpoint and relation to acoustic multiple scattering}\label{Feynman}

\noindent
We begin by recalling the schematic representation outlined by Nelson \cite{Nelison} on the concept of Feynman diagram. A Feynman diagram—with \( N \)-interactions is given by
\begin{align}
    \big(\text{Coupling Constants}\big)\int \big(\text{Propagators}\big)\; \big(\text{Delta functions}\big)\; d\big(\text{internal lines}\big).
\end{align}
In this work, we reduce the acoustic scattering problem by a collection of bubbles to a delay-coupled hyperbolic system for amplitudes \(\bm{Y}(t) = (Y_i)_{i=1}^M\) of the form
    \begin{align} \label{matrix2}
        \begin{cases}
              \mathcal{A} \frac{d^2}{dt^2}\bm{Y}(t) + \bm{Y}(t) = \bm{\mathcal{F}}(t) \quad \text{in } (0, T), \\
              \bm{Y}(0) = \frac{d}{dt} \bm{Y}(0) = \mathbf{0}.
        \end{cases}
    \end{align}
This leads to a Volterra-type integral equation for \(\bm{Y}(t)\), which has the solution
\begin{align}
    \bm{Y}(t) = \sum_{n=0}^\infty (-1)^n K^{*n} * \bm{\mathcal{F}}(t).
\end{align}
Here, \( K := \big[K_{ij}\big] \) is a causal matrix kernel, and the expression above is a Neumann series in the time-domain convolution. Each \( K_{ij} \) consists of two parts, namely:
\[
K_{ij}(t)= K_{ij}^{(d)}(t) + K_{ij}^{(c)}(t),
\]
where \( K_{ij}^{(d)}(t) \) is the atomic part,
\[
K_{ij}^{(d)}(t):= \frac{q_{ij}}{\omega_M^2}\delta(t-\tau_{ij}),
\]
and \( K_{ij}^{(c)}(t) \) represents the non-atomic part, which capture the effects of dispersion. Furthermore, the following identity holds in the distributional sense (see (\ref{kd})):
    \begin{align}
         \big(K^{(d)}\big)^{*N}_{ij}(t)
         = \frac{1}{\omega_M^{2N}}\sum\limits_{\substack{\gamma=(i_0,i_1,\ldots,i_N) \\i_0=i,i_N=j}} \Bigg(\prod\limits_{k=0}^{N-1} q_{i_k,i_{k+1}}\Bigg) \delta \Bigg(t-\sum_{k=0}^{N-1}\tau_{i_k i_{k+1}}\Bigg),
    \end{align}
where \(\gamma\) denotes a path of length \(N\) from a bubble location \(z_i\) to another one \(z_j\).
\noindent
In our setting, for a chosen path of indices \(\gamma = (i_0, i_1, \ldots, i_N)\) of length \(N\), the indices \(i_1, i_2, \ldots, i_{N-1}\) represent the vertices of the diagram. The Dirac part of the kernel, \(K_{ij}^{(d)}(t):= \frac{q_{ij}}{\omega_M^2}\delta(t-\tau_{ij}),\)  plays the role of a propagator-vertex pair. More precisely, each Dirac kernel
\[
K_{i_k i_{k+1}}^{(d)}(t):= \frac{q_{i_k i_{k+1}}}{\omega_M^2}\delta(t-\tau_{i_k i_{k+1}})
\]
acts as a retarded point-propagator (analogous to the Helmholtz-type propagator in standard quantum field theory (QFT); see \cite[Chapter 4]{Nair}), propagating a singularity from bubble \(i_k\) to bubble \(i_{k+1}\) with a fixed travel time \(\tau_{i_k i_{k+1}}\), while $\prod\limits_{k=0}^{N-1} q_{i_k,i_{k+1}}$ represents the product of coupling constants. Observing the form of $\big(K^{(d)}\big)^{*N}_{ij}(t)$, we see that in $\mathcal{D}'(0,\infty)$, the convolution over the intermediate interaction times $t_1,t_2,\ldots,t_{N-1}$,
$$\big(K^{(d)}\big)^{*N}_{ij}(t) = \int_{0<t_{N-1}<\ldots<t_1<t}\prod_{k=0}^{N-1}K_{i_k i_{k+1}}^{(d)}(t_k-t_{k+1})dt_1\ldots dt_{N-1},\; t_0:=t,t_N:=0,$$
reduces, via a distributional identity, to the Dirac delta
$$\delta \Bigg(t-\sum_{k=0}^{N-1}\tau_{i_k i_{k+1}}\Bigg).$$
This delta function is the analogue of the kinematic conservation-enforcing delta functions in Nelson's schematic formula. To this end, choosing a fixed path $\gamma=(i_0,i_1,\ldots,i_N)$ with $i_0=i$ and $i_N=j$ and integrating over the time-ordered simplex $\{0<t_{N-1}<\cdots<t_1<t\}$ using the conventions $t_0:=t$ and $t_N:=0$, the contribution of this path to the N-fold convolution is
\begin{align}
    \big(\text{Coupling Constants}\big)\int \big(\text{Propagators}\big)\; \big(\text{Delta functions}\big)\; d\big(\text{internal lines}\big).
\end{align}
which in our setting takes the explicit form
\begin{align}
    \Bigg(\prod_{k=0}^{N-1}\frac{q_{i_k i_{k+1}}}{\omega_M^{2}}\Bigg)
    \int_{0<t_{N-1}<\cdots<t_1<t}
    \prod_{k=0}^{N-1}
     \delta\bigl(t_k - t_{k+1} - \tau_{i_k i_{k+1}}\bigr)\
    dt_1\cdots dt_{N-1}.
\end{align}
It is important to note that the Dirac part of the kernel plays a dual role: the individual $K_{i_k i_{k+1}}^{(d)}$ act as propagators, while their convolution along a path creates a delta constraint that eliminates the internal time variables.

\subsection{Discussion}

The results displayed in Section \ref{Main-results} and Section \ref{Feynman} establish a rigorous and interpretable framework for transient multiple scattering in dispersive bubbly media, addressing critical gaps in classical Foldy-Lax theory while introducing a novel diagrammatic perspective. Below, we discuss these results and highlight their potential implications.
\begin{enumerate}

\item  Theorem \ref{maintheorem}’s solvability and error estimates resolve a longstanding limitation of time-domain Foldy-Lax models: the lack of rigorous justification for dispersion-inclusive systems. By combining anisotropic Hilbert spaces (\(H_{0,\sigma}^r\)) and Hardy-Sobolev spaces, we prove that the delayed-coupled hyperbolic system admits a unique solution expressed as a convergent Neumann series of convolution powers. This convergence is non-trivial—unlike frequency-domain models, time-domain dispersive systems introduce delayed interactions and frequency-dependent scattering coefficients (via Minnaert resonance) that violate classical compactness assumptions. Our use of Laplace transform isometries and operator-valued function space techniques ensures the scattering kernel’s boundedness, a key technical contribution that validates the series’ convergence for practical parameter ranges.The truncation error’s geometric decay (Section \ref{Main-results}, Result 2) is particularly impactful for applications. Near resonance, where dispersive effects dominate, higher-order multi-scattering (up to \(N < 1/(1-p)\)) contributes meaningfully to transient fields, while measurement errors (of order \(\varepsilon^2\)) are dominated by these higher-order terms. This quantifies the Foldy-Lax approximation’s accuracy: for closely spaced bubbles (\(p < 1\)) and incident waves with spectral energy in the resonance band, truncating the series at N interactions retains physically relevant dynamics without spurious artifacts. This resolves ambiguity in prior works, which often assumed truncation without quantifying how many interactions are necessary to capture transient responses.

\item  The mapping between Foldy-Lax amplitude equations and Feynman diagrams (Section \ref{Feynman}) provides a powerful conceptual and computational tool, bridging classical multiple scattering theory with quantum field theory (QFT) techniques. By interpreting inter-bubble scattering events as vertices, wave propagation as retarded propagators, and convolution powers as multi-scattering paths, we reduce the complexity of higher-order interactions to diagrammatic kinematics. This correspondence is not merely heuristic: the Dirac delta functions enforcing time-delay conservation in the convolution kernels directly mirror the kinematic constraints in QFT Feynman rules, linking classical acoustic scattering to fundamental principles of particle interaction modeling. Practically, this diagrammatic framework simplifies the tracking of multi-scattering paths. For example, a path \(A \to B \to C\) (two vertices) corresponds to a Feynman diagram whose amplitude scales with \(\varepsilon^{2(1-p)+1}\), aligning with Theorem \ref{maintheorem}’s error estimates. Additionally, the diagrammatic perspective clarifies why resonance enhances higher-order contributions: near Minnaert resonance, the scattering kernel’s spectral amplitude is amplified, making diagrammatic terms corresponding to longer paths (more vertices) significant.


\item  These results extend beyond bubbly acoustic media to any dispersive system with resonant scatterers. Indeed, the functional-analytic framework can be adapted to other dispersive mechanisms (e.g., Lorentz resonance in dielectrics) by modifying the scattering kernel’s frequency dependence. 
\end{enumerate}
\section{Proof of Theorem \ref{maintheorem}}

\noindent 
We first state the following proposition.
\begin{proposition}\cite[Theorem 1.1]{Arpan-Sini-JEEQ} \label{main-prop}
    Consider the acoustic problem (\ref{hyperbolic}) generated by a cluster of resonating acoustic gas bubbles \( D_j \) for \( j = 1, 2, \ldots, M \). Then, under the following conditions:
\begin{align}\label{inversion-cond}
    \frac{\rho_\mathrm{c}}{4\pi} \, \text{vol}(\mathrm{B}_j) \, \Big(\frac{\varepsilon}{d}\Big)^6 \, \Big(\frac{1}{\lambda_1^{(3)}}\Big)^2 < 1,\ \textcolor{black}{j=1,2,\ldots,M}, 
    \quad \text{and} \quad 
    \mathbf{C} \, \max_{1 \leq m \leq M} \sum_{\substack{j=1 \\ j \neq m}}^M \frac{1}{4\pi |z_m - z_j|} < \omega_M^2,
\end{align}
with \( \mathbf{C} := \max\limits_{1 \leq j \leq M} \mathbf{C}_j \), the scattered field \( u^\textit{sc}(\mathrm{x}, t) \) has the following asymptotic expansion:
\begin{align} \label{assymptotic-expansion-us}
    u^\textit{sc}(\mathrm{x}, t) = -\sum_{m=1}^M \frac{C_m}{4\pi |\mathrm{x} - \mathrm{z}_m|} \mathrm{Y}_m\big(t - \mathrm{c}_0^{-1} |\mathrm{x} - \mathrm{z}_m|\big) + \mathcal{O}(M\varepsilon^{2}) \quad \text{as} \quad \varepsilon \to 0,
\end{align}
for \( (\mathrm{x}, t) \in \mathbb{R}^3 \setminus \mathcal{K} \times (0, \mathrm{T}) \) with \( \overline{\mathbf{\Omega}} \subset \subset \mathcal{K} \), \textcolor{black}{($\mathcal{K}$ is any bounded domain such that $\operatorname{dist}(\partial \mathcal{K}, \bm{\Omega}) > \operatorname{const}$, where $\operatorname{const}$ is independent of $\varepsilon$)}, where \( \big(\mathrm{Y}_j\big)_{j=1}^M \) is the vector solution to the following non-homogeneous second-order matrix differential equation with zero initial conditions: 
\begin{align} \label{matrixmulti}
    \begin{cases}
        \omega_M^2 \frac{\mathrm{d}^2}{\mathrm{d} t^2} \mathrm{Y}_m(t) + \mathrm{Y}_m(t) + \sum\limits_{\substack{j=1 \\ j \neq m}}^M \frac{\mathbf{C}_j}{4\pi |z_m - z_j|} \frac{\mathrm{d}^2}{\mathrm{d} t^2} \mathrm{Y}_j\big(t - \mathrm{c}_0^{-1} |\mathrm{z}_m - \mathrm{z}_j|\big) = \frac{\partial^2}{\partial t^2} u^\textit{in}(z_m,t), &\textit{in} \; (0, \mathrm{T}), \\
        \mathrm{Y}_m(0) = \frac{\mathrm{d}}{\mathrm{d} t} \mathrm{Y}_m(0) = 0,
    \end{cases}
\end{align}
where \( \omega_M^2 = \frac{\rho_c}{2 \overline{\kappa}_{\mathrm{b},m}} \Lambda_{\partial B_m} \) represents the inverse square of the Minnaert frequency \( \omega_{\operatorname{Min}}:=\sqrt{\frac{2 \overline{\kappa}_{\mathrm{b},m}}{\rho_c \Lambda_{\partial B_m}} }\) of the bubble \( D_m \) (see, for instance, \cite{AZ18, DGS-21}), and \( \mathbf{C}_j := C^{(0)}_j \varepsilon \) with \( C^{(0)}_j := \operatorname{vol}(B_j) \frac{\rho_c}{\overline{\kappa}_{\mathrm{b},j}} \).
\\
Here, \( \displaystyle \Lambda_{\partial B_m} := \frac{1}{|\partial B_m|} \int_{\partial B_m} \int_{\partial B_m} \frac{(\mathrm{x} - \mathrm{y}) \cdot \nu_\mathrm{x}}{|\mathrm{x} - \mathrm{y}|} \, d\sigma_\mathrm{x} \, d\sigma_\mathrm{y} \) is a geometric constant. The well-posedness of the system of differential equations (\ref{matrixmulti}) is discussed in \cite[Section 2.4]{Arpan-Sini-JEEQ}.
\end{proposition}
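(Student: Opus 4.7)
The plan is to Laplace-transform the delayed system (\ref{matrix2}) into a resolvent equation on the Hardy-Sobolev space $H_{r,\sigma}$, solve it by a Neumann series in the frequency domain, and return to the time domain via the unitary isomorphism $\mathcal{L}:H^r_{0,\sigma}\to H_{r,\sigma}$. Taking the Laplace transform with the zero initial data gives $[(\omega_M^2 s^2+1)I+s^2 Q(s)]\widehat{\bm Y}(s)=\widehat{\mathbcal F}(s)$, where $Q_{ij}(s)=q_{ij}e^{-s\tau_{ij}}$ off-diagonal and vanishes on the diagonal. Factoring out the scalar Minnaert symbol yields $(I+T(s))\widehat{\bm Y}(s)=\widehat V(s)$ with $T(s)=\frac{s^2}{\omega_M^2 s^2+1}Q(s)$. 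The key scalar estimate
\[
\left|\frac{s^2}{\omega_M^2 s^2+1}\right|\le \frac{|s|^2}{\omega_M^2|s|^2-1}\le \frac{\sigma_0^2}{\omega_M^2\sigma_0^2-1}\qquad\text{on }\{\Re s\ge\sigma_0>\omega_M^{-1}\},
\]
obtained from $|\omega_M^2 s^2+1|\ge \omega_M^2|s|^2-1$ and the monotonicity of $v\mapsto v/(\omega_M^2 v-1)$, combined with the $\ell^\infty_M$-row-sum bound $\|Q(s)\|_\infty\le\max_i\sum_{j\ne i}q_{ij}$ and the scaling $q_{ij}\le \frac{C^{(0)}}{4\pi\widetilde d}\varepsilon^{1-p}$ from Assumption \ref{as11}, delivers $\|T(s)\|_\infty\le \alpha_\infty\varepsilon^{1-p}<1$ under the stated smallness hypothesis. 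Hence $(I+T(s))^{-1}=\sum_{n}(-1)^n T(s)^n$ converges geometrically in $H^\infty(\mathbb{C}_{\sigma_0}^+,\mathcal L(\ell_\infty^M))$.

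Next I identify the time-domain kernel. The decomposition $\frac{s^2}{\omega_M^2 s^2+1}=\frac{1}{\omega_M^2}-\frac{\omega_M^{-4}}{s^2+\omega_M^{-2}}$ inverse-transforms to $\omega_M^{-2}\delta-\omega_M^{-3}\sin(\cdot/\omega_M)$, and the factor $e^{-s\tau_{ij}}$ shifts by $\tau_{ij}$; this reproduces the Dirac-plus-$L^1$ decomposition $K=K^{(d)}+K^{(c)}$ of Section \ref{Feynman} and places $K$ in the Banach algebra $(\mathcal B([0,T])\oplus L^1([0,T]))^{M\times M}$, so $K^{*n}$ is well defined and $\mathcal L[K^{*n}]=T(s)^n$. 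Inverting the convergent geometric series term by term through the unitary isomorphism produces the time-domain representation $\bm Y(t)=\sum_n(-1)^n K^{*n}*V(t)$. For the remainder estimate (\ref{d}) I write $\widehat R_N(s)=\big[\sum_{n\ge N+1}(-1)^n T(s)^n\big]\widehat V(s)$, whose multiplier has $H^\infty$-norm bounded by $\|T\|_\infty^{N+1}/(1-\|T\|_\infty)$, and apply Corollary \ref{cor1.3} together with the scalar bound $\sigma_0^2/(\omega_M^2\sigma_0^2-1)$ on the resolvent factor linking $\|\widehat V\|_{H_{r,\sigma}}$ to $\|\widehat{\mathbcal F}\|_{H_{r,\sigma}}$.

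For Part 2, I substitute the truncated Neumann expansion $Y_m=\sum_{n=0}^N(-1)^n(K^{*n}*V)_m+R_{N,m}$ into the asymptotic expansion of Proposition \ref{main-prop}. The first sum reconstructs $u^{\textit{sc},N}$, while the tail contributes $-\sum_m \frac{C_m}{4\pi|x-z_m|}R_{N,m}(t-c_0^{-1}|x-z_m|)$; by translation-invariance of $H^r_{0,\sigma}$, the scalings $C_m=C^{(0)}\varepsilon$, $|x-z_m|\ge d_{\min}$, and the Part~1 bound, this term is at most a multiple of $M\varepsilon^{1+(N+1)(1-p)}$, which is absorbed into the $\mathcal O(M\varepsilon^2)$ of Proposition \ref{main-prop} once $(N+1)(1-p)\ge 1$. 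For Part 3 the increment $u^{\textit{sc},N}-u^{\textit{sc},N-1}$ is exactly the single $N$-th term of the Neumann expansion of $u^{\textit{sc}}$. I then invoke the diagrammatic decomposition of Section \ref{Feynman}: $(K^{(d)})^{*N}_{ij}=\omega_M^{-2N}\sum_{\gamma:i\to j}Q_\gamma\delta(\cdot-\tau_\gamma)$. Isolating the maximizing path $\gamma^*$, its contribution to $(K^{*N}*V)_{i_N}$ is $\omega_M^{-2N}Q_{\gamma^*}V_{i_0}(\cdot-\tau_{\gamma^*})$; restricting the defining integral of $\|\cdot\|_{H_{r,\sigma}}$ to $\Omega_{\mathrm{res}}$ and using $|\widehat V_{i_0}(\sigma+i\omega)|\ge m$ and $(\sigma^2+\omega^2)^r\ge r_{\min}^2$ there gives a lower bound of order $\omega_M^{-2N}Q_{\gamma^*}\,m\sqrt{2h}\,r_{\min}\,e^{-\sigma\tau_{\gamma^*}}$. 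Restoring the outer prefactor $C_m/(4\pi|x-z_m|)$ and the shift-absorption $e^{-\sigma c_0^{-1}d_{\max}}$, and using $Q_{\gamma^*}=\varepsilon^{N(1-p)}Q^{(0)}_{\gamma^*}$, yields the $L^*\varepsilon^{N(1-p)+1}$ term. The remaining atomic paths and the non-atomic residual $K^{*N}-(K^{(d)})^{*N}$ are upper-bounded by rerunning the Part~1 estimate on the remaining $M-1$ indices, producing the subtracted $(M-1)B\,\varepsilon^{N(1-p)+1}$ term. The chosen largeness of $m\sqrt{2h}$ makes $L^*-(M-1)B>0$; and $p>1-1/N$ implies $N(1-p)+1<2$, so $\varepsilon^{N(1-p)+1}\gg M\varepsilon^2$ as $\varepsilon\to 0$.

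The main obstacle is Part 3: isolating a single diagrammatic path in the operator-valued Neumann series and showing its Hardy-Sobolev contribution survives against the sum over all other paths plus the continuous kernel correction. This requires a reverse-triangle-inequality argument at the $H^r_{0,\sigma}$ level, combining the frequency-localized lower bound from the spectral hypothesis on $V_{i_0}$ over the resonance band $\Omega_{\mathrm{res}}$ with uniform operator-norm upper bounds for the remaining paths, with careful bookkeeping of the products $\prod_k q_{i_k i_{k+1}}$ and the translation-by-$\tau_\gamma$ phases. Parts 1 and 2 are essentially algebraic once the scalar bound on $s^2/(\omega_M^2 s^2+1)$ and Corollary \ref{cor1.3} are in hand.
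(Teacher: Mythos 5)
Your proposal does not prove the statement in question. The statement is Proposition \ref{main-prop} (quoted from \cite[Theorem 1.1]{Arpan-Sini-JEEQ}): starting from the transmission problem (\ref{hyperbolic}) for the full wave field in the presence of the bubbles \(D_j\), one must show that under the invertibility conditions (\ref{inversion-cond}) the scattered field admits the point-scatterer expansion (\ref{assymptotic-expansion-us}), with amplitudes \(Y_m\) governed by the delayed system (\ref{matrixmulti}) and a uniform error of order \(M\varepsilon^2\) away from the cluster. What you sketch instead is a proof of Theorem \ref{maintheorem}: solvability of the amplitude system, the Neumann-series representation, the remainder bound (\ref{d}), and the Part~3 lower bound. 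Indeed, your own Part~2 begins by substituting the truncated Neumann expansion ``into the asymptotic expansion of Proposition \ref{main-prop}'', i.e., you invoke as a given precisely the statement you were asked to prove. Relative to the target statement the argument is circular; relative to the paper, you have reproduced (quite faithfully) the authors' proof of Theorem \ref{maintheorem} in Sections~2--3, which likewise takes Proposition \ref{main-prop} for granted---but by citation, not by proof.

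What is missing is the entire content of the proposition: the passage from the PDE to the Foldy--Lax system. This cannot be reached by Laplace-transforming the amplitude equations, because those equations are the conclusion of the proposition, not its hypothesis. Following the cited work, the derivation requires a time-domain layer-potential (retarded potential) representation of \(u\), the small-bubble scaling \([\kappa_{b,j},\rho_{b,j}]=[\overline{\kappa}_{b,j}\varepsilon^2,\overline{\rho}_{b,j}\varepsilon^2]\) of Assumption \ref{as111}, the Helmholtz decomposition (\ref{decomposition-introduction}) together with the spectral analysis of the magnetization operator \(\mathbb{M}^{(0)}_{B_j}\)---which is where the first condition in (\ref{inversion-cond}), involving \(\lambda_1^{(3)}\) and \((\varepsilon/d)^6\), enters---and an asymptotic matching that produces the Minnaert constant \(\omega_M^2\), the coefficients \(\mathbf{C}_j\), the retarded couplings \(\frac{\mathbf{C}_j}{4\pi|z_m-z_j|}\frac{d^2}{dt^2}Y_j\big(t-c_0^{-1}|z_m-z_j|\big)\), and the \(\mathcal{O}(M\varepsilon^2)\) error control on \(\mathbb{R}^3\setminus\mathcal{K}\). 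None of these elements appears in your proposal, and none of the tools you do deploy (scalar resolvent bounds, \(H^\infty\) multiplier estimates, convolution algebras on \([0,T]\)) can substitute for them.
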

\noindent
\begin{lemma}
The system of differential equations (\ref{matrixmulti}) reduces, under the Fourier–Laplace transform, to the following matrix equation:
\begin{align}
    (I+T(s))\widehat{\mathbf Y}(s) = V(s), \qquad \Re s > \sigma_0,
\end{align}
where
\begin{align}
    D(s) = \operatorname{diag}(\omega_M^2 s^2+1)_{i=1}^M,
\qquad Q(s) = (q_{ij}e^{-s\tau_{ij}})_{i,j=1}^M,
\end{align}
and
\begin{align}
    T(s) := D(s)^{-1}s^2 Q(s),
\qquad V(s) := D(s)^{-1}s^2 F(s),
\end{align}
with \(\mathbcal{F}(s) = \Laplace [\mathbcal{F}(t)]\) the Laplace transform of the causal forcing vector
$\mathcal{F}_i(t) = \partial_t^2 u^{\mathrm{in}}(z_i,t), \; i=1,\dots,M,\quad M \in \mathbb N.$
Moreover, \(V(s)\) is analytic for \(\Re s > \sigma_0\).
For each i=1,\dots,M, the parameters satisfy
\begin{align*}
    \omega_M^2 > 0,
\qquad \tau_{ij} = c_0^{-1}|z_i - z_j| \ge 0 \ (j=1,\dots,M),
\qquad q_{ij} = \frac{\mathbf C_j}{4\pi |z_i - z_j|} \in \mathbb R,
\end{align*}
with the convention \(q_{ii} = 0\).
\end{lemma}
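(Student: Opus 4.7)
The plan is to Laplace-transform (\ref{matrixmulti}) componentwise and then assemble the resulting $M$ scalar equations into a single matrix equation. The argument is essentially bookkeeping; the subtle points are the treatment of the delays and specifying where the symbols are analytic.

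First I apply $\Laplace$ to each of the $M$ scalar equations. Using the zero initial data $Y_m(0) = \dot Y_m(0) = 0$, no boundary residuals appear, so $\Laplace[\ddot Y_m](s) = s^2 \widehat Y_m(s)$. The diagonal contribution $\omega_M^2 \ddot Y_m + Y_m$ therefore collapses to $(\omega_M^2 s^2 + 1)\widehat Y_m(s)$, which identifies the diagonal symbol $D(s) = \operatorname{diag}(\omega_M^2 s^2 + 1)$. For the delayed off-diagonal terms, since each $Y_j$ is supported in $[0,\infty)$ and $\tau_{mj} = c_0^{-1}|z_m - z_j| \ge 0$, the shifted function $Y_j(\cdot - \tau_{mj})$ is causal with vanishing data at $t = \tau_{mj}$; the shift rule then gives $\Laplace[\ddot Y_j(\cdot - \tau_{mj})](s) = s^2 e^{-s\tau_{mj}}\widehat Y_j(s)$ again without polynomial residuals. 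Assembling the coefficients $q_{mj}$ into $Q(s) := (q_{ij} e^{-s \tau_{ij}})_{i,j}$ with convention $q_{ii} := 0$, the full system becomes the single matrix identity $[D(s) + s^2 Q(s)]\widehat{\mathbf Y}(s) = \widehat{\mathbcal F}(s)$.

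Next I premultiply by $D(s)^{-1}$. The zeros of $\omega_M^2 s^2 + 1$ are the purely imaginary pair $s = \pm i/\omega_M$, so $D(s)^{-1}$ is entrywise analytic on $\{\Re s > 0\}$ and thus on the strip $\{\Re s > \sigma_0\}$ selected in the statement (recall $\sigma_0 > 1/\omega_M$). This yields the desired factorization $(I + T(s))\widehat{\mathbf Y}(s) = V(s)$ with $T(s) = D(s)^{-1}s^2 Q(s)$ and $V(s) = D(s)^{-1}\widehat{\mathbcal F}(s)$; the equivalent form $V(s) = D(s)^{-1} s^2 \widehat U^{\mathrm{in}}(s)$ quoted in the statement follows immediately from the relation $\widehat{\mathcal F}_i(s) = s^2 \widehat{u^{\mathrm{in}}(z_i,\cdot)}(s)$ granted by the zero initial data on the source. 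Analyticity of $V$ on $\{\Re s > \sigma_0\}$ is then automatic: each $\mathcal F_i$ is causal in $H^r_{0,\sigma}$ for $\sigma > \sigma_0$, so the Laplace isometry $\Laplace : H^r_{0,\sigma} \to H_{r,\sigma}$ from Section 1.3 renders $\widehat{\mathbcal F}$ analytic on this half-plane, and composition with the entrywise-analytic $D(s)^{-1}$ preserves analyticity.

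I do not foresee a serious obstacle here. The single point that warrants care is verifying that the shift identity $\Laplace[\ddot Y_j(\cdot - \tau_{mj})] = s^2 e^{-s\tau_{mj}}\widehat Y_j$ holds with no polynomial-in-$s$ residuals. This reduces to the observation that the causal extension of $Y_j$ has vanishing trace and first time-derivative at $t = 0$ (and hence, upon shifting, at $t = \tau_{mj}$), which is fully compatible with the distributional interpretation available for elements of $H^r_{0,\sigma}$.
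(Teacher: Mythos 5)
Your proof is correct and takes essentially the same route as the paper: termwise Laplace transform with zero initial data, the shift rule for the delayed couplings, assembly into $\big[D(s)+s^{2}Q(s)\big]\widehat{\mathbf Y}(s)=\widehat{\mathcal F}(s)$, and premultiplication by the entrywise-analytic $D(s)^{-1}$. If anything, your write-up is more complete than the paper's own proof, which only verifies the analyticity of $T(s)$ and $V(s)$ and then asserts that ``the claimed matrix equation follows.'' One remark: your derivation yields $V(s)=D(s)^{-1}\widehat{\mathcal F}(s)$ and you reconcile the statement's $V(s)=D(s)^{-1}s^{2}F(s)$ by reading $F$ as $\widehat{u^{\mathrm{in}}}(z_i,\cdot)$; note that elsewhere the paper uses $\widehat V_i(s)=\tfrac{s^{2}}{\omega_M^{2}s^{2}+1}\widehat{\mathcal F}_i(s)$ with $\mathcal F_i=\partial_t^{2}u^{\mathrm{in}}(z_i,\cdot)$, so the extra factor of $s^{2}$ is an internal inconsistency of the source that is worth flagging explicitly rather than silently absorbing.
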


\begin{proof}
  Each diagonal entry of \(D(s)\) is of the form \(\omega_M^2 s^2 + 1\). This polynomial has its zeros on the imaginary axis at $s = \pm \tfrac{i}{\omega_M}.$ Therefore, for every fixed \(\omega_M^2 > 0\), the scalar function $s \mapsto \frac{1}{\omega_M^2 s^2 + 1}$ is analytic in the open half-plane \({ \Re s > \sigma_0 }\). Consequently, the diagonal matrix \(D(s)^{-1}\) is analytic entry-wise in \({ \Re s > \sigma_0 }\). Next, note that
\[
T(s) = D(s)^{-1} s^2 Q(s).
\]
Here, both \(s \mapsto s^2\) and \(s \mapsto e^{-s\tau_{ij}}\) are entire functions, and their product with the analytic factor \((\omega_M^2 s^2 + 1)^{-1}\) remains analytic for \(\Re s > \sigma_0\). Explicitly, each entry is given by
\[
T_{ij}(s) = \frac{s^2}{\omega_M^2 s^2 + 1} q_{ij} e^{-s\tau_{ij}}.
\]
This is the product of two analytic functions on \({\Re s > \sigma_0}\), hence analytic there. Therefore \(T(s)\) is analytic in \({ \Re s > \sigma_0 }\).
\newline
A similar argument applies to
\[
V(s) = D(s)^{-1} s^2 \mathcal{F}(s),
\]
since the Laplace transform (F(s)) of a causal forcing function is analytic for \(\Re s > \sigma_0\).
\newline
Thus, both \(T(s)\) and \(V(s)\) are analytic in the right half-plane, and the claimed matrix equation follows.  
\end{proof}
\noindent
Next, the crucial step is to provide a justification that the inverse Laplace of each scalar entry of $T(s)$ i.e.
$$T_{ij}(s) = \frac{s^2}{\omega_M^2 s^2 + 1} q_{ij} e^{-s\tau_{ij}}$$ exist.
\newline
We start with denoting the following term as $R_{\omega_M^2}(s) := \frac{s^2}{\omega_M^2 s^2 + 1},$ which has the following precise decomposition for $\omega_M^2>0$:
\begin{align*}
    R_{\omega_M^2}(s) = \frac{1}{\omega_M^2} - \frac{1}{\omega_M^2\big(\omega_M^2 s^2 + 1\big)}
\end{align*}
We have that \(\mathcal L^{-1}[\frac{1}{\omega_M^2}]=\frac{1}{\omega_M^2}\delta(t)\) by linearity and \(\mathcal L^{-1}[1]=\delta\). Also, we have \(\mathcal L^{-1}[(\omega_M^2 s^2+1)^{-1}]=(\frac{1}{\omega_M})\sin(\frac{t}{\omega_M})\mathbf1_{t\ge0}\), as noted above; multiply by \(-\frac{1}{\omega_M^2}\) to obtain the second piece:
\[
\mathcal L^{-1}\Big[-\frac{1}{\omega_M^2}\cdot\frac{1}{\omega_M^2 s^2+1}\Big](t)
= -\frac{1}{\omega_M^3}\sin\!\Big(\frac{t}{\omega_M}\Big)\mathbf1_{t\ge0}.
\]
\noindent
Thus for the rational factor alone,
\begin{align}\label{decom}
    \mathcal L^{-1}[R_{\omega_M^2}](t)=\frac{1}{\omega_M^2}\delta(t)-\frac{1}{\omega_M^3}\sin\!\Big(\frac{t}{\omega_M}\Big)\mathbf1_{t\ge0},
\end{align}
as distributions supported in \([0,\infty)\).
\newline
Let $f\in\mathcal D'([0,\infty))$ be a distribution supported in $[0,\infty)$ and Laplace transformable. Then, due to the well-known shift theorem, we have the following property
$$\Laplace \{f(t-a)\mathbf1_{t\ge a}\} = e^{-sa}\Laplace\{f\} \; \text{for}\ \Re s > \sigma_0\ \text{and}\ a>0,$$
where $\sigma_0 = \inf\{\sigma: e^{-\sigma t}f(t) \in L^1(0,\infty)\}.$ In addition, we note that as $e^{-st}$ is an entire analytic function, $\Laplace\{f\}$ is a holomorphic function for $\Re s > \sigma_0.$ 
\newline
Consequently, using the above property to the decomposition in (\ref{decom}) yields the explicit inverse for \(T_{ij}\)
\begin{align}\label{kernelt}
K_{ij}(t)=\mathcal L^{-1}[T_{ij}](t)
= q_{ij}\Big(\frac{1}{\omega_M^2}\delta(t-\tau_{ij})
-\frac{1}{\omega_M^3}\sin\!\big(\tfrac{t-\tau_{ij}}{\omega_M}\big)\mathbf1_{t\ge\tau_{ij}}\Big).
\end{align}

\begin{lemma}\label{imlemma}
Let us fix $\omega_M>0$ and $\sigma_0>\frac{1}{\omega_M}$. Then, the Neumann series for \( (I + T(s))^{-1} \) converges uniformly on the half plane \( \{s\in \mathbb{C}: \Re s \ge \sigma_0\} \) if the bubbles are distributed in such a way that the following inequality holds:
\[
\varepsilon \cdot \max_{1 \leq i \leq M} \sum_{j \neq i} \frac{C^{(0)}_j}{4\pi|z_i - z_j|} < \omega_M^2 \left(1 - \frac{1}{\omega_M^2\sigma_0^2}\right).
\]
\end{lemma}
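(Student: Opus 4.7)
The plan is to bound the matrix $T(s)$ in a sub-multiplicative operator norm uniformly for $\Re s\ge\sigma_0$ and then invoke the standard Neumann-series criterion in the Banach algebra of $M\times M$ complex matrices. The only nontrivial step is a uniform scalar bound on the rational prefactor $g(s):=s^{2}/(\omega_M^{2}s^{2}+1)$; once that is in hand, the remaining matrix-norm manipulations are elementary.

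First I would rewrite $g(s)=1/(\omega_M^{2}+s^{-2})$ and apply the reverse triangle inequality to obtain $|g(s)|\le 1/(\omega_M^{2}-|s|^{-2})$. Since $|s|\ge \Re s\ge\sigma_0>\omega_M^{-1}$, the denominator is strictly positive and monotone decreasing in $|s|$, so
\[
\sup_{\Re s\ge\sigma_0}|g(s)|\le \frac{\sigma_0^{2}}{\omega_M^{2}\sigma_0^{2}-1}=:g_{\max}.
\]
Combining this with $|e^{-s\tau_{ij}}|\le 1$ (valid since $\Re s\ge 0$ and $\tau_{ij}\ge 0$) and $q_{ij}=C^{(0)}_{j}\varepsilon/(4\pi|z_i-z_j|)+\mathcal{O}(\varepsilon^{3})$ from (\ref{b_j}), each entry satisfies $|T_{ij}(s)|\le g_{\max}\, q_{ij}$ with $q_{ii}=0$; the $\mathcal{O}(\varepsilon^{3})$ correction is absorbed because the hypothesis of the lemma is a strict inequality.

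Next I would take the induced $\ell^{\infty}$ (maximum row-sum) operator norm, giving
\[
\|T(s)\|_{\infty}\le g_{\max}\cdot\varepsilon\max_{1\le i\le M}\sum_{j\ne i}\frac{C^{(0)}_{j}}{4\pi|z_i-z_j|}.
\]
The hypothesis of the lemma is precisely $g_{\max}\cdot \varepsilon\max_{i}\sum_{j\ne i}C^{(0)}_{j}/(4\pi|z_i-z_j|)<1$, after the identity $\omega_M^{2}(1-1/(\omega_M^{2}\sigma_0^{2}))=1/g_{\max}$; hence there exists $\alpha\in[0,1)$ independent of $s$ with $\|T(s)\|_{\infty}\le\alpha$ on the whole closed half-plane $\{\Re s\ge\sigma_0\}$. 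The Banach-algebra Neumann expansion then gives $(I+T(s))^{-1}=\sum_{n=0}^{\infty}(-T(s))^{n}$ with tail $\|\sum_{n>N}(-T(s))^{n}\|_{\infty}\le \alpha^{N+1}/(1-\alpha)$, and the $s$-independence of $\alpha$ is precisely uniform convergence on $\{\Re s\ge\sigma_0\}$.

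The hard part will be the scalar estimate on $g$: the denominator $\omega_M^{2}s^{2}+1$ vanishes on the imaginary axis at $s=\pm i/\omega_M$, so any route that does not quantitatively keep $s$ away from these poles will fail. The assumption $\sigma_0>\omega_M^{-1}$ is exactly what makes the reverse triangle inequality produce a finite constant, and the resulting threshold $g_{\max}$ is the natural and sharpest bound depending on $|s|$ alone; a refined Phragmén–Lindelöf analysis applied directly to the minorant $\omega_M^{2}+s^{-2}$ could improve the constant slightly but would not alter the form of the hypothesis.
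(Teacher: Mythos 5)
Your proof is correct and follows essentially the same route as the paper's: the same reverse-triangle-inequality lower bound $|\omega_M^2 s^2+1|\ge \omega_M^2|s|^2-1\ge \omega_M^2\sigma_0^2-1$ on the rational prefactor (you merely divide by $s^2$ first), the same use of $|e^{-s\tau_{ij}}|\le 1$ together with the $\ell^\infty$ row-sum operator norm, and the same uniform Neumann-series criterion $\sup_{\Re s\ge\sigma_0}\|T(s)\|_\infty<1$. One trivial wording slip: the denominator $\omega_M^2-|s|^{-2}$ is \emph{increasing} in $|s|$, so the bound $1/(\omega_M^2-|s|^{-2})$ is decreasing and its supremum over $|s|\ge\sigma_0$ is attained at $|s|=\sigma_0$, which is exactly the value you use.
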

\begin{proof}
First, we have:
\[
T(s) = D(s)^{-1} s^2 Q(s),
\]
where
\[
D(s) = \operatorname{diag}(\omega_M^2 s^2 + 1)_{i=1}^M, \quad Q(s) = (q_{ij} e^{-s \tau_{ij}})_{i,j=1}^M,
\]
and
\[
q_{ij} = \frac{\mathbf{C}_j}{4\pi |z_i - z_j|}, \quad \tau_{ij} = c_0^{-1} |z_i - z_j|.
\]
Also, \(\mathbf{C}_j = C^{(0)}_j \varepsilon\) with $\varepsilon\ll 1.$
\newline
First, we will show that for all \(s \in \mathbb{C}\) with \(\Re s\ge \sigma_0 > \frac{1}{\omega_M}\), we have the uniform bound:
\[
\sup\limits_{\Re s\ge \sigma_0}\left|\frac{s^2}{\omega_M^2 s^2 + 1}\right| \leq \frac{\sigma_0^2}{\omega_M^2 \sigma_0^2 - 1} = \frac{1}{\omega_M^2} \cdot \frac{1}{1 - \frac{1}{\omega_M^2 \sigma_0^2}}
\]
\newline
Let us now recall that
\begin{align}\label{defrs}
R_{\omega_M^2}(s) = \frac{s^2}{\omega_M^2 s^2 + 1}.
\end{align}
We then begin by providing a uniform bound for $R_{\omega_M^2}(s)$ in terms of $s.$ Since $|R(s)|\to \infty$ near the poles $\pm \frac{i}{\omega_M}$, we require \( \Re s\sigma_0 \). Moreover, we can still approach arbitrarily close to these poles, making a uniform bound impossible over the entire right half-plane. Therefore, we restrict our analysis on the half-plane 
\[
\Re s\ge \sigma_0 > \frac{1}{\omega_M}.
\]
\newline
Next, due to the consideration $\Re s\ge \sigma_0 > \frac{1}{\omega_M},$ we see that
\begin{align}\label{in}
    |\omega_M^2 s^2 + 1| \geq \big| |\omega_M^2s|^2 - 1\big| \geq \omega_M^2|s|^2 - 1 \geq \omega_M^2\sigma_0^2 - 1 > 0.
\end{align}
Since, we have
\[
|s| \geq \sigma_0 > \frac{1}{\omega_M},\ \text{which further implies}\ \omega_M^2 |s|^2 > 1.
\]
It shows the validity of the inequality (\ref{in}). Consequently, we obtain 
\begin{align}\label{2.10}
    \sup\limits_{\Re s\ge \sigma_0}|R(s)| = \frac{|s|^2}{|\omega_M^2 s^2 + 1|} \leq \frac{|s|^2}{\omega_M^2|s|^2 - 1} \leq \frac{1}{\omega_M^2} \cdot \frac{1}{1 - \frac{1}{\omega_M^2|s|^2}} \leq \frac{1}{\omega_M^2} \cdot \frac{1}{1 - \frac{1}{\omega_M^2\sigma_0^2}}
\end{align}
In addition to that, due to the fact \( \Re s \ge \sigma_0 \) and \( \tau_{ij} \geq 0 \), we have \( |e^{-s\tau_{ij}}| \leq 1 \). Subsequently, the Neumann series converges whenever
\begin{align}
    \sup\limits_{\Re s\ge \sigma_0}\|T(s)\|_{\operatorname{op}} \leq \frac{\sigma_0^2}{\omega_M^2 \sigma_0^2 - 1} \cdot \max_{1 \leq i \leq M} \sum_{j \neq i} |q_{ij}| < 1.
\end{align}
Here, we note that the operator norm $\Vert \cdot \Vert_{\operatorname{op}}$ on $\mathbb{C}^{M\times M}$ is the $\ell^\infty-$induced norm, given by $\Vert A \Vert_{\operatorname{op}}=\max\limits_{i}\sum\limits_{j}|A_{ij}|$. We use this norm in the subsequent analysis.
\newline
We also recall that $q_{ij}=\varepsilon\frac{C_{j}^{(0)}}{4\pi d_{ij}} \asymp \varepsilon^{1-p}\frac{C_{j}^{(0)}}{4\pi \widetilde{d}},$ where we set $q^{(0)}_{ij} \asymp \frac{C_{j}^{(0)}}{4\pi\widetilde{d}}.$
Substituting \(q_{ij} = \varepsilon^{1-p}q^{(0)}_{ij}\), we obtain the convergence condition of the Neumann series
\[
\varepsilon^{1-p} \cdot \max_{1 \leq i \leq M} \sum_{j \neq i}  q^{(0)}_{ij} < \frac{\omega_M^2 \sigma_0^2 - 1}{\sigma_0^2} = \omega_M^2 \left(1 - \frac{1}{\omega_M^2\sigma_0^2}\right),
\]
which completes the proof.
\end{proof}
\noindent
Applying the above Lemma to \((I+T(s))\widehat{\mathbf Y}(s)=V(s)\) gives the unique solution
\begin{align}\label{Neumann-Frequency}
    \widehat{\mathbf{\bm{Y}}}(s) = (I+T(s))^{-1}V(s)
= \sum_{n=0}^\infty (-1)^n T(s)^{n}V(s).
\end{align}
Next, we aim to provide a time-domain expression $\mathrm{\mathbf{Y}}(t)$ corresponding to the above frequency domain expression $\widehat{\mathbf{\bm{Y}}}(s).$ Therefore, we next prove that the inverse Laplace of \(T(s)^n\) exists and equals \(K^{*n}\) \big(\text{i.e.}\ \(\mathcal L[K^{*n}]=T(s)^n\big).\) 
\newline
Before proceeding, we state the following Lemma:
\begin{lemma}\label{lemma-convolution}\cite{borel}
Let \(\mu\) be a finite Borel measure on \([0,T]\) and \(f\in L^1([0,T])\). Then
$$(\mu * f)(t)= \int_0^Tf(t-\tau)d\mu(\tau)\ \text{exists for a.e.}\ t,$$ and satisfies
$$\|\mu*f\|_{L^1}\le \|\mu\|_{TV}\,\|f\|_{L^1}.$$
\end{lemma}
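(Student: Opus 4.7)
The plan is to reduce the lemma to the Fubini--Tonelli theorem applied on the product measure space $\bigl([0,T]\times[0,T],\, dt\otimes d|\mu|\bigr)$, where $|\mu|$ denotes the total variation measure of $\mu$. Since $t$ and $\tau$ both range in $[0,T]$, the difference $t-\tau$ may be negative, so I first extend $f$ by zero to all of $\mathbb{R}$; this does not change $\|f\|_{L^1}$ and makes the integrand $f(t-\tau)$ well-defined on the whole square. This extension is the only concession to the finite-interval setting; everything else is the classical Young-type convolution estimate for a measure against an $L^1$ function.

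The main computation proceeds in two steps. First I would establish joint measurability of $(t,\tau)\mapsto f(t-\tau)$ with respect to the product $\sigma$-algebra generated by Lebesgue sets in $t$ and Borel sets in $\tau$. For a continuous compactly supported $f$ this is immediate from joint continuity of translation; the general case follows by density of $C_c(\mathbb{R})$ in $L^1(\mathbb{R})$, passing to an a.e.-convergent subsequence. Once measurability is available, Tonelli's theorem applied to the non-negative integrand $|f(t-\tau)|$ justifies the swap
\begin{align*}
\int_0^T\!\!\int_0^T |f(t-\tau)|\,d|\mu|(\tau)\,dt
&= \int_0^T\!\!\int_0^T |f(t-\tau)|\,dt\,d|\mu|(\tau) \\
&\le |\mu|\bigl([0,T]\bigr)\,\|f\|_{L^1} = \|\mu\|_{TV}\,\|f\|_{L^1},
\end{align*}
where the inequality uses translation invariance of Lebesgue measure to bound the inner integral by $\|f\|_{L^1}$ uniformly in $\tau$. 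This shows $f(t-\tau)\in L^1\bigl([0,T]^2,\, dt\otimes d|\mu|\bigr)$.

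With product-space integrability in hand, Fubini's theorem applied to the signed integrand $f(t-\tau)$ yields all three conclusions of the lemma simultaneously: for a.e.\ $t$ the slice $\tau\mapsto f(t-\tau)$ is $|\mu|$-integrable (hence $\mu$-integrable, via the Hahn decomposition), so $(\mu*f)(t)$ is well-defined almost everywhere; the resulting function is Lebesgue measurable in $t$; and reinserting absolute values together with the Tonelli bound above gives $\|\mu*f\|_{L^1}\le \|\mu\|_{TV}\|f\|_{L^1}$. The only genuinely technical point, and hence the main obstacle, is the joint measurability step, since $\mu$ is merely a Borel measure and need not be absolutely continuous with respect to Lebesgue measure; this is what forces one to argue by approximation rather than by a direct change-of-variable. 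Everything else reduces to translation invariance and elementary properties of the total variation.
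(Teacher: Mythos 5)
The paper does not actually prove this lemma; it is stated with a citation to Folland, and the standard proof there is precisely the Fubini--Tonelli argument you give, so your proposal is correct and matches the intended argument. The only step worth tightening is the joint measurability of $(t,\tau)\mapsto f(t-\tau)$ with respect to $dt\otimes d|\mu|$: rather than approximating by $C_c$ functions (where you would still need to check that an $a.e.$-Lebesgue-null exceptional set $N$ pulls back to a $dt\otimes d|\mu|$-null set $\{(t,\tau): t-\tau\in N\}$, which follows since each $t$-slice is a translate of $N$), it is cleaner to replace $f$ by a Borel representative, for which $f(t-\tau)$ is Borel on the square and Tonelli applies directly; either route closes the gap and the rest of your argument, including the reduction of the $\mu$-integral to the $|\mu|$-integral via the Hahn decomposition and the identification $\|\mu\|_{TV}=|\mu|([0,T])$, is sound.
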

\noindent
The following proposition will prove that for all $n$, the n-fold convolution power \(K^{*n}\) is well defined and has entries in the function space $\mathcal{B}([0,T]) \oplus L^1([0,T]).$
\begin{proposition}\label{prop1}
Let $K$ be the matrix-valued distribution with entries  
\begin{align}\label{matrix K1}
K_{ij}(t)=\mathcal L^{-1}[T_{ij}](t)
= q_{ij}\Big(\frac{1}{\omega_M^2}\delta(t-\tau_{ij})
-\frac{1}{\omega_M^3}\sin\!\big(\tfrac{t-\tau_{ij}}{\omega_M}\big)\mathbf1_{t\ge\tau_{ij}}\Big),
\end{align}
and $T(s)$ be the matrix valued function defined by
\begin{align}
    T_{ij}(s) = \frac{s^2}{\omega_M^2 s^2 + 1} q_{ij} e^{-s\tau_{ij}}.
\end{align}
Let us now define the convolution powers recursively as follows $K^{*0}=\delta I,$ $K^{*1}= K,$ and $K^{*(n+1)}= K^{*n} * K$ for $n\ge 1.$ Then, we have each entry of $K^{*n}$ is an element of $\mathcal{B}([0,T]) \oplus L^1([0,T])$ and the Laplace transform of $K^{*n}$ satisfies \(\mathcal L[K^{*n}]=T(s)^n.\)

\end{proposition}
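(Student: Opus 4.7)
The plan is to establish both claims by induction on $n$: first, membership of each entry of $K^{*n}$ in the Banach convolution algebra $\mathcal{A} := \mathcal{B}([0,T]) \oplus L^1([0,T])$ equipped with the sum norm $\|\mu+f\|_{\mathcal{A}} := \|\mu\|_{TV} + \|f\|_{L^1}$, and second, the Laplace identity $\mathcal{L}[K^{*n}](s) = T(s)^n$. The proof is essentially structural rather than computational.

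I would first verify that $\mathcal{A}$ is a Banach convolution algebra (with unit $\delta_0$) by checking the three pairwise products: $\|\mu * \nu\|_{TV} \le \|\mu\|_{TV}\|\nu\|_{TV}$ for two finite signed measures by a Fubini argument; $\|\mu * f\|_{L^1} \le \|\mu\|_{TV}\|f\|_{L^1}$ by Lemma \ref{lemma-convolution}; and $L^1 * L^1 \subset L^1$ by Young's inequality. Causality of all factors makes every convolution well-defined, and the matrix convolution $(A*B)_{ij} := \sum_k A_{ik}*B_{kj}$ promotes these properties to $\mathcal{A}^{M\times M}$ under the standard induced operator norm. The base case $n=1$ is then immediate from (\ref{matrix K1}): the atomic part $K_{ij}^{(d)} = (q_{ij}/\omega_M^2)\delta_{\tau_{ij}}$ is a point mass in $\mathcal{B}([0,T])$, while $K_{ij}^{(c)}(t) = -(q_{ij}/\omega_M^3)\sin((t-\tau_{ij})/\omega_M)\mathbf{1}_{t\ge \tau_{ij}}$ is bounded on $[0,T]$ hence in $L^1([0,T])$. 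The inductive step is automatic: each entry $(K^{*(n+1)})_{ij} = \sum_k K^{*n}_{ik} * K_{kj}$ is a finite sum of elements of $\mathcal{A}$.

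For the Laplace identity, the base case $\mathcal{L}[K](s) = T(s)$ was already established in (\ref{kernelt}). The inductive step invokes the classical convolution theorem for the Laplace transform on $\mathcal{A}$: since every element is causal and Laplace-transformable on $\{\Re s > \sigma_0\}$, a Fubini argument applied separately to the measure-measure, measure-$L^1$, and $L^1 * L^1$ subproducts yields $\mathcal{L}[(\mu+f)*(\nu+g)](s) = \mathcal{L}[\mu+f](s) \cdot \mathcal{L}[\nu+g](s)$. Applied entry-wise to matrices, this gives $\mathcal{L}[K^{*(n+1)}] = \mathcal{L}[K^{*n}] \cdot \mathcal{L}[K] = T(s)^n \cdot T(s) = T(s)^{n+1}$, closing the induction.

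The main obstacle worth flagging is a mild support-growth bookkeeping issue: convolving two objects supported on $[0,T]$ produces support in $[0,2T]$, so the statement ``$K^{*n} \in \mathcal{A}^{M\times M}$'' must be read as asserting that the restriction to $[0,T]$ lies in $\mathcal{A}$, with the convolution algebra itself naturally living on the full causal half-line $[0,\infty)$. This interpretation has no effect on the Laplace identity, since Laplace transforms are computed over $[0,\infty)$ and all factors are causal, so one can unambiguously perform all convolutions on $[0,\infty)$ and restrict to $[0,T]$ only at the end to read off membership in $\mathcal{A}$.
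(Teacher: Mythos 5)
Your proposal is correct and follows essentially the same route as the paper: induction on $n$, stability of $\mathcal{B}([0,T])\oplus L^1([0,T])$ under convolution, and the convolution theorem for the Laplace transform to get $\mathcal{L}[K^{*(n+1)}]=T(s)^{n+1}$. If anything, you are more careful than the paper's own argument, which invokes only Lemma~\ref{lemma-convolution} (the measure--$L^1$ case) and leaves the measure--measure and $L^1$--$L^1$ closure, as well as the support-growth from $[0,T]$ to $[0,nT]$, implicit; your explicit treatment of the three pairwise products and your remark that the convolutions should be performed on the causal half-line before restricting to $[0,T]$ fill exactly those gaps.
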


\begin{proof}
We prove by induction on \(n\).
\newline
For $n=0,$ we know by definition that \(K^{*0}=\delta I\). Since, Dirac delta function $\delta$ is a finite measure on $[0,T]$ with total variation 1, it follows that $\delta I \in \mathcal{B}([0,T])^{M\times M},$ and thus in $\big(\mathcal{B}([0,T]) \oplus L^1([0,T])\big)^{M\times M}.$ Therefore, its Laplace transform is $\mathcal{L} [\delta I] = I = T(s)^0.$
\newline
Next, we assume that the statement holds for all $n\ge 0.$ Then, we have $K^{*(n+1)}=K^{*n}* K$. Now, by induction hypothesis $K^{*n} \in \mathcal{B}([0,T]) \oplus L^1([0,T])$, and by definition of $K$ (\ref{matrix K1}), which is combination of a measure and $L^1$ function, we see that it also lies in $\mathcal{B}([0,T]) \oplus L^1([0,T]).$ Therefore, due to Lemma \ref{lemma-convolution} for convolution, we find that each entry of $K^{*(n+1)}$ is in $\mathcal{B}([0,T]) \oplus L^1([0,T]).$
\newline
For the Laplace transform, using convolution theorem, we have the following:
\begin{align}
    \mathcal L[K^{*(n+1)}]= \mathcal L[K^{*n}*K]= \mathcal L[K^{*n}]\cdot \mathcal L[K] = T(s)^n\cdot T(s) =T(s)^{n+1}.
\end{align}
This completes the induction.
\end{proof}
\noindent
Before moving forward, we point out that we will not explicitly use the norm properties of the Banach space $\mathcal{B}([0,T]) \oplus L^1([0,T])$. Instead, we only use the fact that it is a Banach space which is stable under convolution and compatible with the Laplace transform, in the sense that \(\mathcal L[K^{*n}]=T(s)^n.\)

\noindent
Consequently, Proposition \ref{prop1} shows that the Neumann series in the time domain, $\sum\limits_{n=0}^\infty (-1)^n K^{*n}*V(t)$ is well-defined and  its Laplace transform is the Neumann series in the frequency domain, $\sum\limits_{n=0}^\infty (-1)^n T^{n}(s)V(s),$ (\ref{Neumann-Frequency}), which converges to 
$\widehat{\mathbf{\bm{Y}}}(s) = (I+T(s))^{-1}V(s)$ provided $\sup\limits_{\Re s\ge \sigma_0}\Vert T(s) \Vert_\infty <1.$
\newline
Therefore, we have the following expression in the time-domain
\begin{align}\label{Neumanntime}
    \mathbf{\bm{Y}}(t) = \sum_{n=0}^\infty (-1)^n K^{*n}*V(t),
\end{align}
the partial sums
\begin{align}
    \mathbf{\bm{Y}}^N(t) = \sum_{n=0}^N (-1)^n K^{*n}*V(t),
\end{align}
the remainder
\begin{align}\label{imesti1}
    R_N(t):= \sum_{n=0}^\infty (-1)^n K^{*n}*V(t)-\sum_{n=0}^N (-1)^n K^{*n}*V(t) = \sum_{n=N+1}^\infty (-1)^n K^{*n}*V(t),
\end{align}
and provide an estimate for the remainder term $R_N$ in the following section.

\subsection{The remainder estimate \(\|Y-Y^N\|\)}\label{remainder}
\noindent

\noindent
We connect this to the convolution operators central to our problem. For a causal kernel (matrix-valued) \(K\), the convolution operator
\begin{align}
    (\mathcal{P}_K F)(t):= (K*F)(t) = \int_0^tK(t-\tau)F(\tau)d\tau
\end{align}
has the Laplace transform
\begin{align}
    \Laplace \big[\mathcal{P}_K F\big](s) = \widehat{K}(s)\cdot\widehat{F}(s).
\end{align}
We define the multiplication operator \(M_{\widehat{K}}: H_{r,\sigma}\to H_{r,\sigma}\) by
\begin{align}
   \big( M_{\widehat{K}}F\big)(s) := \widehat{K}(s)\cdot\widehat{F}(s).
\end{align}
Since $\widehat{K}\in H^\infty\big(\mathbb{C}_{\sigma}^+,\mathcal{L}(\mathbb{C}^M)\big)$, for any \(\widehat{F}\in H_{r,\sigma},\) we derive the following estimate due to Corollary \ref{cor1.3}:
\begin{align}
    \Vert M_{\widehat{K}}\widehat{F} \Vert_{H_{r,\sigma}}^2 = \frac{1}{2\pi}\int_{-\infty}^\infty |s|^{2r}\,\|\widehat K(s)\|_{\operatorname{op}}^2\,\|\widehat F(s)\|^2\,d\omega
    \le \Big(\sup_{\Re s=\sigma_0} \|\widehat K(s)\|_{\operatorname{op}}^2\Big)\, \| \widehat F \|_{H_{r,\sigma}}^2.
\end{align}
Since, \(\mathcal{L}\) is an isometry between \(H^r_{0,\sigma}\) and \(H_{r,\sigma},\) and \(\mathcal{P}_K\) corresponds to \( M_{\widehat{K}}\), \Big(\(\text{i.e.,}\ H^r_{0,\sigma} \overset{\Laplace}{\longrightarrow}H_{r,\sigma} \overset{M_{\widehat{K}}}{\longrightarrow}H_{r,\sigma} \overset{\Laplace^{-1}}{\longrightarrow}H^r_{0,\sigma}\Big)\) for \(F\in H^r_{0,\sigma}\), we derive that
\begin{align}
    \|K*F\|_{H^r_{0,\sigma}}
    = \| M_{\widehat K}\widehat F\|_{H_{r,\sigma}}
    \le \Big(\sup_{\Re s=\sigma} \|\widehat K(s)\|_{\operatorname{op}}\Big)\,\|F\|_{H^r_{0,\sigma}} .
\end{align}
Above, we denoted $\widehat{K}(s)$ as the Laplace transform of a causal kernel $K,$ which belong to $\mathcal{B}[0,T]\oplus L^1[0,T].$ In addition, $\widehat{K}(s)$ is analytic and bounded for $\Re s\ge\sigma_0$ with $\sigma_0>\frac{1}{\omega_M}$ as well as it is continuous on the closure $\Re s\ge\sigma_0$. This means that we can apply maximum modulus principle and consequently, for $\sigma\ge \sigma_0,$ we have
\begin{align}
    \sup_{\Re s\ge\sigma} \|\widehat K(s)\|_{\operatorname{op}} = \sup_{\Re s=\sigma} \|\widehat K(s)\|_{\operatorname{op}} .
\end{align}
In particular, we set $\alpha_{\sigma_0}:= \sup\limits_{\Re s=\sigma_0}\|\widehat K(s)\|_{\operatorname{op}}$ as \(\sup\limits_{\Re s\ge\sigma} \|\widehat K(s)\|_{\operatorname{op}}\le \alpha_{\sigma_0.}\) Consequently, we obtain
\begin{align}
    \Vert K*F\Vert_{H^r_{0,\sigma}} \le \alpha_{\sigma_0} \Vert F\Vert_{H^r_{0,\sigma}}.
\end{align}
Hence, from this estimate, by submultiplicativity of the operator norm and induction on $n$, it follows that
\begin{align}\label{imesti}
    \Vert \mathcal{P}_K ^n F \Vert_{H^r_{0,\sigma}} \le \Vert \mathcal{P}_K  \Vert^n \Vert F \Vert_{H^r_{0,\sigma}} = \alpha_{\sigma_0}^n \Vert F \Vert_{H^r_{0,\sigma}},
\end{align}
where \(\mathcal{P}_K \) is the convolution operator \(\mathcal{P}_K : F\to K* F\) acting on the Hilbert space \(H^r_{0,\sigma}(0,\infty;\mathbb{C}^M)\) and \(\alpha_{\sigma_0}:= \Vert \mathcal{P}_K \Vert_{\mathcal{L}(H^r_{0,\sigma})}<\infty.\)
\newline
Now, under the condition \(\alpha_{\sigma_0}<1,\) the geometric series $\sum\limits_{n=0}^\infty \alpha_{\sigma_0}^n$ converges. Then, using the triangle inequality and the estimate \eqref{imesti}, we obtain from the Neumann series representation the following:
\begin{align}
    \|R_N\|_{H^r_{0,\sigma}} \leq \sum_{n=N+1}^{\infty} \|K^{*n}*V\|_{H^r_{0,\sigma}}\le \left(\sum_{n=N+1}^\infty \alpha_{\sigma_0}^{N+1}\right)\|V\|_{H^r_{0,\sigma}} = \frac{\alpha_{\sigma_0}^{N+1}}{1-\alpha_{\sigma_0}}\|V\|_{H^r_{0,\sigma}}.
\end{align}
Due to Lemma \ref{imlemma} and Proposition \ref{prop1}, with $T(s)= \widehat{K}(s)$, it is easy to see that
\begin{align}
    \alpha_{\sigma_0} = \sup_{\Re s={\sigma_0}}\Vert T(s)\Vert_{\operatorname{op}}.
\end{align}
Let us now recall \(q_{ij} = \frac{\mathbf C_j}{4\pi |z_i - z_j|}\) where $C_j= C_j^{(0)}\varepsilon + \mathcal{O}(\varepsilon^3).$ Here, $C_j^{(0)}:=\frac{\rho_c}{\overline{\kappa}_{b,j}}\operatorname{vol}(B_j)$ and we also set $q_{ij}=\varepsilon\frac{C_{j}^{(0)}}{4\pi d_{ij}} \asymp \varepsilon^{1-p}\frac{C_{j}^{(0)}}{4\pi \widetilde{d}}.$ Assuming that the bubbles have identical shapes and material properties, let \(C^{(0)} := C^{(0)}_j\). Thus, we denote $q^{(0)}_{ij} \asymp \frac{C_{j}^{(0)}}{4\pi \widetilde{d}},$ where the distance between any two bubbles scales as $d=\min\limits_{i\neq j}|z_i-z_j|= \widetilde{d}\ \varepsilon^p$ with $p>0$ and $i\ne j,$ with $\widetilde{d}$ is independent on $\varepsilon.$
\newline
From this, we finally obtain
\begin{align}\label{1}
    \|R_N\|_{H^r_{0,\sigma}} \leq \frac{\alpha_\infty^{N+1}}{1-\alpha_\infty \varepsilon^{1-p}}\varepsilon^{(N+1)(1-p)}\|V\|_{H^r_{0,\sigma}},
\end{align}
with
\begin{align}
    \alpha_\infty\leq \frac{\sigma_0^2}{\widetilde{d}\big(\omega_M^2\sigma_0^2-1\big)}(M-1)C^{(0)}.
\end{align}
We now focus on estimating the term $\|V\|_{H^r_{0,\sigma}}.$ We first recall its entry-wise frequency domain counter part i.e.
$$V_i(s) = \frac{s^2}{\omega_M^2s^2+1} F_i(s),$$
which can be written as
$$V_i(s) = R_{\omega_M^2}(s)F_i(s),\; \text{with}\ R_{\omega_M^2}(s) := \frac{s^2}{\omega_M^2s^2+1}\ \text{by}\ (\ref{defrs}).$$
Therefore, we see that for $F_i \in H_{r,\sigma}$, we derive that
\begin{align}
    \Vert V_i(s) \Vert_{H_{r,\sigma}}^2 
    = \frac{1}{2\pi}\int_{-\infty}^\infty |s|^{2r}\big|R_{\omega_M^2}(s)\big|^2\big\|F(\sigma+i\omega)\big\|_X^2dw
    \leq \big(\sup\limits_{\Re s= \sigma_0} \big|R_{\omega_M^2}(s)\big|^2\big) \Vert F_i(s) \Vert_{H_{r,\sigma}}^2,
\end{align}
which further implies due to the fact that \(\mathcal{L}\) is an isometry between \(H^r_{0,\sigma}\) and \(H_{r,\sigma},\) the following
\begin{align}\label{2}
    \Vert V_i(t) \Vert_{H^r_{0,\sigma}}^2 
    \le \big(\sup\limits_{\Re s= \sigma_0} \big|R_{\omega_M^2}(s)\big|^2\big) \Vert F_i(t) \Vert_{H^r_{0,\sigma}}^2 = \Big(\sup\limits_{\Re s= \sigma_0} \frac{|s|^2}{|\omega_M^2s^2+1|}\Big)^2 \Vert F_i(t) \Vert_{H^r_{0,\sigma}}^2.
\end{align}
Consequently, due to the estimate (\ref{2.10}), we have 
\begin{align}\label{3}
    \sup\limits_{\Re s= \sigma_0} |R_{\omega_M^2}(s)| \le \frac{\sigma_0^2}{\omega_M^2\sigma_0^2-1}.
\end{align}
Consequently, using (\ref{2}) and (\ref{3}) in (\ref{1}), we finally obtain that
\begin{align}
    \|R_N\|_{H^r_{0,\sigma}} \leq \frac{\alpha_\infty^{N+1}}{1-\alpha_\infty \varepsilon^{1-p}}\varepsilon^{(N+1)(1-p)}\frac{\sigma_0^2}{\omega_M^2\sigma_0^2-1}\|\mathbcal{F}\|_{H^r_{0,\sigma}},
\end{align}
with
\[\alpha_\infty\leq \frac{\sigma_0^2}{\widetilde{d}\big(\omega_M^2\sigma_0^2-1\big)}(M-1)C^{(0)}.\]
This completes the proof of the estimate (\ref{d}) in Theorem \ref{maintheorem}. \qed

\subsection{Truncation Error for the Scattered Field}\label{Tr} 
\noindent
We begin by recalling the asymptotic expansion for the scattered acoustic wave field $u^\textit{sc}$, as described in Proposition \ref{main-prop}:
\begin{align}\label{sc-expansion}
    u^{\textit{sc}}(x,t)
    =-\sum_{m=1}^M\frac{C_m}{4\pi|x-\mathbf{z}_m|}
    Y_m\!\big(t-c_0^{-1}|x-\mathbf{z}_m|\big)
    +\mathcal{O}(M\varepsilon^2),
\end{align}
which holds uniformly for $x\notin\mathcal{K}$.
\newline
Then, we substitute the Neumann-series representation for $\bm{Y}$ (\ref{Neumanntime}), \(Y_m(t)=\sum\limits_{n=0}^\infty (-1)^n(K^{*n}*V)_m(t)\), into the above asymptotic expansion to derive the following:
\begin{align}\label{sc-series}
    u^{\textit{sc}}(x,t)
    =-\sum_{m=1}^M\frac{C_m}{4\pi|x-\mathbf{z}_m|}
    \sum_{n=0}^\infty (-1)^n
    (K^{*n}*V)_m\!\big(t-c_0^{-1}|x-\mathbf{z}_m|\big)
    +\mathcal{O}(M\varepsilon^2).
\end{align}
Let us now define the truncated acoustic field $u^{\textit{sc},N}$, which we obtain by truncating the inner series at $n=N$. Then,
\begin{align}\label{sc-difference}
    u^{\textit{sc}}(x,t)-u^{\textit{sc},N}(x,t)
    =-\sum_{m=1}^M \frac{C_m}{4\pi|x-\mathbf{z}_m|}
    \sum_{n=N+1}^{\infty}(-1)^n
    (K^{*n}*V)_m\!\big(t-c_0^{-1}|x-\mathbf{z}_m|\big)
    +\mathcal{O}(M\varepsilon^2).
\end{align}
We define the remainder vector of the series for $\mathbf{Y}(t)$ as $\mathbf{R}_N(t):=\sum\limits_{n=N+1}^{\infty}(-1)^n K^{*n}*\mathbf{V}(t)$. In Section \ref{remainder}, we derive the following estimate for its $H_{0,\sigma}^r$-norm:
\begin{align}\label{R_N-bound}
    \|\mathbf{R}_N\|_{H^r_{0,\sigma}}
    \le
    \frac{\alpha_\infty^{N+1}}{1-\alpha_\infty\varepsilon^{1-p}}\, \varepsilon^{(N+1)(1-p)}
    \frac{\sigma_0^2}{\omega_M^2\sigma_0^2-1}\,\|\mathbcal{F}\|_{H^r_{0,\sigma}},
\end{align}
where we recall that
\[\alpha_\infty\leq \frac{\sigma_0^2}{\widetilde{d}\big(\omega_M^2\sigma_0^2-1\big)}(M-1)C^{(0)}.\]
Since each component satisfies $|Y_m(t)-Y_{m,N}(t)|\le \|\mathbf{R}_N\|_{\infty,[0,T]}$, substituting \eqref{R_N-bound} into \eqref{sc-difference} yields
\begin{align}\label{sc-diff-bound}
    \|u^{\textit{sc}}(x,t)-u^{\textit{sc},N}(x,t)\|_{H^r_{0,\sigma}}
    &\le
    \sum_{m=1}^M\frac{|C_m|}{4\pi|x-\mathbf{z}_m|}
    \|\mathbf{R}_N(\cdot-c_0^{-1}|x-z_m|)\|_{H^r_{0,\sigma}}
    +\mathcal{O}(M\varepsilon^2)\notag\\
    &\le
    \frac{M\, \mathbf{\mathrm{C}}^{(0)}\varepsilon}{4\pi\,d_{\operatorname{min}}}\, e^{-\sigma c_0^{-1}d_{\operatorname{min}}}
    \|\mathbf{R}_N\|_{H^r_{0,\sigma}}
    +\mathcal{O}(M\varepsilon^2),
\end{align}
where $d_{\operatorname{min}}:=\min\limits_m|x-\mathbf{z}_m|>0$ is fixed because $x \in \mathbb{R}^3\setminus \overline{D}$. Therefore, substituting the estimate \eqref{R_N-bound} into \eqref{sc-diff-bound} yields
\begin{align}
    \|u^{\textit{sc}}(x,t)-u^{\textit{sc},N}(x,t)\|_{H^r_{0,\sigma}}
    \nonumber&\le
    \frac{M\,C^{(0)}}{4\pi\,d_{\operatorname{min}}} \varepsilon^{(N+1)(1-p)+1} e^{-\sigma c_0^{-1}d_{\operatorname{min}}}
    \frac{\alpha_\infty^{N+1}}{1-\alpha_\infty\varepsilon^{1-p}}\,
    \frac{\sigma_0^2}{\omega_M^2\sigma_0^2-1}\,\|\mathbcal{F}\|_{H^r_{0,\sigma}}
   \\ & +\mathcal{O}(M\varepsilon^2).
\end{align}
This completes the proof of the estimate (\ref{e}) in Theorem \ref{maintheorem}. \qed

\section{Validating Foldy-Lax Approximation}     

\noindent
In this section, we aim to estimate the term $u^{sc,N}-u^{sc,N-1},$ which can be expressed as follows:
    \begin{align}\label{sc-difference}
         u^{\textit{sc},N}(x,t)-u^{\textit{sc},N-1}(x,t)
         =-\sum_{m=1}^M \frac{C_m}{4\pi|x-z_m|}(-1)^N (K^{*N}*V)_m\!\big(t-c_0^{-1}|x-z_m|\big).
    \end{align}
We note that this is the single $n=N$ term from the partial sums for each $\bm{Y}_m.$ 
\newline
We fix a path of indices $\gamma = (i_0, i_1, \ldots, i_N)$ of length $N$ and denote the product of the scattering coefficients along this path as $Q_\gamma := \prod\limits_{k=0}^{N-1} q_{i_k, i_{k+1}} $ and due to assumptions on the coefficient parameters, we have $Q_\gamma >0.$
\newline
Since we aim to understand the behavior of the scattered field at the $N^{\text{th}}$ interaction, it is natural to consider a specific path $\gamma$. This sequence of scatterings starts at $i_0$ via $V_{i_0}$ and ends at $i_N$, so the natural choice is to examine the $i_N$-th component of the vector $K^{*N}*V$ to isolate this path's contribution; any other component $m \neq i_N$ will not contain it. Consequently, we obtain that
    \begin{align}\label{sc1}
        \|u^{\textit{sc},N}(x,t)-u^{\textit{sc},N-1}(x,t)\|_{H^r_{0,\sigma}} 
        &\nonumber\ge \Big\Vert\frac{\mathbf{C}_{i_N}}{4\pi|x-z_{i_N}|} (K^{*N}*V)_{i_N}\!\big(\cdot-c_0^{-1}|x-z_{i_N}|\big)\Big\Vert_{H_{0,\sigma}^r}
         \\ & - \Vert\sum_{m\ne i_N} \frac{C_m}{4\pi|x-z_m|} (K^{*N}*V)_m\!\big(t-c_0^{-1}|x-z_m|\big)\|_{H^r_{0,\sigma}}.
    \end{align}

\noindent
Moreover, we recall the kernel entries $K_{ij}(t)$
    \begin{align}\label{kernelt1}
        K_{ij}(t)
        = q_{ij}\Big(\frac{1}{\omega_M^2}\delta(t-\tau_{ij})
        -\frac{1}{\omega_M^3}\sin\!\big(\tfrac{t-\tau_{ij}}{\omega_M}\big)\mathbf1_{t\ge\tau_{ij}}\Big),
    \end{align}
which consists of an atomic Dirac part and continuous part. Let us set $K^{(d)}_{ij}(t):=\frac{q_{ij}}{\omega_M^2}\delta(t-\tau_{ij})$ and $K^{(c)} := K_{ij}(t)-K^{(d)}_{ij}(t).$
We have the following equality
    \begin{align}
         \big(K^{(d)}\big)^{*N}_{ij}(t) 
         = \frac{1}{\omega_M^{2N}}\sum\limits_{\substack{\gamma=(i_0,i_1,\ldots,i_N) \\i_0=i,i_N=j}} \Big(\prod\limits_{k=0}^{N-1} q_{i_k,i_{k+1}} \Big) \delta \Big(t-\sum_{k=0}^{N-1}\tau_{i_k i_{k+1}}\Big).
    \end{align}
We skip the proof to Section \ref{appen}. Consequently, we obtain that
    \begin{align} \label{n11}
        (K^{*N}*V)_{i_N}(t) 
        = C_\gamma(t) + R_\gamma(t),
    \end{align}
where we denote
    \begin{align}
          C_\gamma(t) 
          := \frac{Q_\gamma}{\omega_M^{2N}} V_{i_0} (t-\tau_\gamma),\; \text{where}\ \tau_\gamma:=\sum_{k=0}^{N-1}\tau_{i_k i_{k+1}},
    \end{align}
and we split $R(t):=R_{\operatorname{atom}}+R_{\operatorname{cont}}$ with $R_{\operatorname{atom}}$ is the sum of contributions of all other atomic paths $\gamma\ne \gamma'$ and $R_{\operatorname{cont}}$ the sum of all contributions coming from words containing at least one $K^{(c)}$ factor.
\newline
We now take $H_{0,\sigma}^r-$norm to the both sides of (\ref{n11}) and use reverse triangle inequality to obtain
\begin{align}
    \Vert (K^{*N}*V)_{i_N} \Vert_{H_{0,\sigma}^r} \ge \Vert C_\gamma\Vert_{H_{0,\sigma}^r} - \Vert R_\gamma\Vert_{H_{0,\sigma}^r}.
\end{align}
\newline
We first consider the following term:
\begin{align}
    C_\gamma(t) := \frac{Q_\gamma}{\omega_M^{2N}} V_{i_0} (t-\tau_\gamma).
\end{align}
For each $s=\sigma+i\omega,$ taking Fourier-Laplace transform of both sides, we obtain
\begin{align}
    \widehat{C}_\gamma(s) = \frac{Q_\gamma}{\omega_M^{2N}} e^{-\tau_\gamma  s}\  \widehat{V}_{i_0}(  s).
\end{align}
We recall that $V_{i_0}(\cdot)$ has Laplace transform
\begin{align}
    \widehat{V}_{i_0}(  s) = \frac{  s^2}{\omega_M^2  s^2+1}\widehat{\mathcal{F}}(  s)\; \text{with}\ \Re   s\ge \sigma_0>\frac{1}{\omega_M}.
\end{align}
Here, we clearly see that, for the resonant angular frequency $\omega_{\operatorname{res}}:= \frac{1}{\omega_M}$, the denominator of the term $\widehat{V}_{i_0}(\cdot)$ vanishes on the imaginary axis $s=\pm i \omega_{\operatorname{res}}$.
\newline
We assume that $V_{i_0}(\cdot)$ has spectral energy in the resonance band $\Omega_{\operatorname{res}} := [\omega_{\operatorname{res}}-h,\omega_{\operatorname{res}}+h]$ in the sense that its Laplace transform satisfies
\begin{align}
|\widehat{V}_{i_0}(\sigma + i\omega)| \ge m,\quad \text{with constants}\ m, h>0.
\end{align}

\noindent
This means that if the incident wave has energy in a band containing the Minnaert frequency $\omega_{\operatorname{res}}$, then $\widehat{V}_{i_0}$ will also have enhanced spectral amplitude $m$ in $\Omega_{\operatorname{res}}$.
\newline
From the definition of the norm for the function space $H_{0,\sigma}^r$, we recall that
\begin{align}\label{imcond}
    \Vert f\Vert^2_{H_{0,\sigma}^r} \asymp \int_{-\infty}^\infty (\sigma^2+\omega^2)^r \big| f(\sigma+i\omega)\big|^2d\omega.
\end{align}
Consequently, we derive that
\begin{align}
    \Vert C_\gamma\Vert^2_{H_{0,\sigma}^r} 
    &\nonumber \asymp \int_{-\infty}^\infty (\sigma^2+\omega^2)^r \Big| \frac{Q_\gamma}{\omega_M^{2N}}e^{-\tau_\gamma(\sigma+i\omega)}\widehat{V}_{i_0}(\sigma+i\omega)\Big|^2d\omega
    \\ &\nonumber = \frac{Q_\gamma^2}{\omega_M^{4N}}e^{-2\sigma\tau_\gamma} \Vert \widehat{V}_{i_0}(\sigma+i\cdot)\Vert_{L^2_r(\mathbb{R})}
    \\ &\nonumber \gtrsim  \frac{Q_\gamma^2}{\omega_M^{4N}}e^{-2\sigma\tau_\gamma} \int_{\Omega_{\operatorname{res}}} (\sigma^2+\omega^2)^r m^2d\omega\qquad [\text{Due to condition}\ (\ref{imcond})] 
    \\ &\ge \frac{Q_\gamma^2}{\omega_M^{4N}}e^{-2\sigma\tau_\gamma} m^2 2h \min\limits_{\omega\in \Omega_{\operatorname{res}}}(\sigma^2+\omega^2)^r,
\end{align}
which further implies, after taking square roots
\begin{align}\label{vim}
    \Vert C_\gamma\Vert_{H_{0,\sigma}^r}  \ge \frac{Q_\gamma}{\omega_M^{2N}}e^{-\sigma\tau_\gamma} m \sqrt{2h} \Big(\min\limits_{\omega\in \Omega_{\operatorname{res}}}(\sigma^2+\omega^2)^r\Big)^\frac{1}{2}.
\end{align}
Subsequently, plugging the above estimate (\ref{vim}) in (\ref{sc1}), we finally obtain
\begin{align}
   \|u^{\textit{sc},N}(x,t)-u^{\textit{sc},N-1}(x,t)\|_{H^r_{0,\sigma}} 
    &\nonumber\ge \frac{C_{i_N}}{4\pi d_{\operatorname{max}}}\frac{Q_\gamma}{\omega_M^{2N}}e^{-\sigma(\tau_\gamma+c_0^{-1}d_{\operatorname{max}})} m \sqrt{2h} \ r_{\operatorname{min}}
    \\ \nonumber&- \frac{C_{i_N}}{4\pi d_{\operatorname{min}}} \|R(\cdot-c_0^{-1}|x-z_{i_N})\|_{H^r_{0,\sigma}}
    \\ &\nonumber - \Vert\sum_{m\ne i_N} \frac{C_m}{4\pi|x-z_m|}
    (K^{*N}*V)_m\!\big(t-c_0^{-1}|x-z_m|\big)\|_{H^r_{0,\sigma}}.
\end{align}
where we set $r_{\operatorname{min}}:= \big(\min\limits_{\omega\in \Omega_{\operatorname{res}}}(\sigma^2+\omega^2)^r\big)^\frac{1}{2}.$, and $d_{\operatorname{max}} := \max\limits_m |x - z_m| > 0$ (fixed, since \(x \notin \overline{D}\)) is the maximal observation distance.
\newline
Due to estimates derived in Section \ref{Tr}, we can show that
\begin{align}
    \frac{C_{i_N}}{4\pi d_{\operatorname{min}}}\|R_\gamma(\cdot-c_0^{-1}|x-z_{i_N})\|_{H^r_{0,\sigma}} \le \frac{C^{(0)}}{4\pi\,d_{\operatorname{min}}}\frac{\sigma_0^2}{\omega_M^2\sigma_0^2-1}\, \varepsilon^{N(1-p)+1}\ \alpha_\infty^{N+1}\, e^{-\sigma c_0^{-1}d_{\operatorname{min}}}
    \|\mathcal{F}\|_{H_{0,\sigma}^r}
\end{align}
and
\begin{align}
    &\nonumber \Vert\sum_{m\ne i_N} \frac{C_m}{4\pi|x-z_m|}
    (K^{*N}*V)_m\!\big(t-c_0^{-1}|x-z_m|\big)\|_{H^r_{0,\sigma}} 
    \\ &\le \frac{(M-1)\,C^{(0)}}{4\pi\,d_{\operatorname{min}}}\frac{\sigma_0^2}{\omega_M^2\sigma_0^2-1}\, \varepsilon^{N(1-p)+1}\ \alpha_\infty^{N+1}\, e^{-\sigma c_0^{-1}d_{\operatorname{min}}}\|\mathcal{F}\|_{H_{0,\sigma}^r}.
\end{align}
Subsequently, we derive that
\begin{align}
     \|u^{\textit{sc},N}(x,t)-u^{\textit{sc},N-1}(x,t)\|_{H^r_{0,\sigma}} 
    &\nonumber\ge \frac{C_{i_N}}{4\pi d_{\operatorname{max}}}\frac{Q_\gamma}{\omega_M^{2N}}e^{-\sigma(\tau_\gamma+c_0^{-1}d_{\operatorname{max}})} m \sqrt{2h}\ r_{\operatorname{min}}
    \\ &\nonumber - \frac{(M-1)\,C^{(0)}}{4\pi\,d_{\operatorname{min}}}\frac{\sigma_0^2}{\omega_M^2\sigma_0^2-1}\, \varepsilon^{N(1-p)+1}\ \alpha_\infty^{N+1}\, e^{-\sigma c_0^{-1}d_{\operatorname{min}}}\|\mathcal{F}\|_{H_{0,\sigma}^r}.
\end{align}
We now recall that $q_{ij} = \frac{C_j}{4\pi |z_i - z_j|}$ with $C_j = \frac{\rho_c}{\overline{\kappa}_{b_j}} \operatorname{vol}(B_j) \varepsilon.$ Since we have identical shapes and material properties of the bubbles, let us set \(C^{(0)} := C^{(0)}_j\). Corresponding to the path $\gamma$, we assume there exist $p\in (0,1)$ and constants $d_{i_k i_{k+1}}^{(0)}>0,$ independent on $\varepsilon,$ such that $|z_{i_k} - z_{i_{k+1}}| = d_{i_k i_{k+1}}^{(0)} \varepsilon^p$ for $k=0,1,\ldots,N-1.$
Thus, we obtain  
$$Q_\gamma := \prod\limits_{k=0}^{N-1} q_{i_k,i_{k+1}}  \asymp \varepsilon^{N(1-p)}Q^{(0)}_\gamma,\; \text{and}\; C_{i_N} \asymp \varepsilon C^{(0)},$$ where $Q^{(0)}_\gamma$ is a positive geometric constant independent on $\varepsilon.$ 
\begin{align}
     \|u^{\textit{sc},N}(x,t)-u^{\textit{sc},N-1}(x,t)\|_{H^r_{0,\sigma}} 
    &\nonumber\ge \Bigg( \frac{Q^{(0)}_\gamma C^{(0)}}{4\pi d_{\operatorname{max}}\omega_M^{2N}}\Bigg)\varepsilon^{N(1-p)+1}\ e^{-\sigma(\tau_\gamma+c_0^{-1}d_{\operatorname{max}})} m \sqrt{2h}\ r_{\operatorname{min}}
    \\ &\nonumber - \frac{(M-1)\,\mathbf{\mathrm{C}}^{(0)}}{4\pi\,d_{\operatorname{min}}}\frac{\sigma_0^2}{\omega_M^2\sigma_0^2-1}\, \varepsilon^{N(1-p)+1}\ \alpha_\infty^{N+1}\, e^{-\sigma c_0^{-1}d_{\operatorname{min}}}\|\mathcal{F}\|_{H_{0,\sigma}^r}.
\end{align}
Let us now denote the term $L_\gamma := \Bigg( \frac{Q^{(0)}_\gamma C^{(0)}}{4\pi d_{\operatorname{max}}\omega_M^{2N}}\Bigg)\varepsilon^{N(1-p)+1}\ e^{-\sigma(\tau_\gamma+c_0^{-1}d_{\operatorname{max}})} m \sqrt{2h}\ r_{\operatorname{min}}$ and $B:= \frac{(M-1)\,\mathbf{\mathrm{C}}^{(0)}}{4\pi\,d_{\operatorname{min}}}\frac{\sigma_0^2}{\omega_M^2\sigma_0^2-1}\, \varepsilon^{N(1-p)+1}\ \alpha_\infty^{N+1}\, e^{-\sigma c_0^{-1}d_{\operatorname{min}}}\|\mathcal{F}\|_{H_{0,\sigma}^r}.$
\newline
Thus, we obtain
\begin{align}
\|u^{\textit{sc},N}(x,t)-u^{\textit{sc},N-1}(x,t)\|_{H^r_{0,\sigma}}
\ge \Big(L_\gamma - (M-1)B\Big)\varepsilon^{N(1-p)+1}.
\end{align}
We now recall that the path $\gamma=(i_0,i_1,\ldots,i_N)$ is an ordered tuple of length $N$ such that $i_k\in {1,2,\ldots,M}.$ Let us now denote $\Gamma_N$ as the collection of all such ordered paths of length $N.$ It is clear that $\Gamma_N$ is a non-empty finite set with $\operatorname{card}(\Gamma_N)= M^{N+1}.$
\newline
Then, after observing the expression of the coefficient $L_\gamma$, selecting $C^{(0)}_{\operatorname{max}} = \max\limits_{j,\gamma}C^{(0)}_j$ and $Q^{(0)}_{\operatorname{max}} = \max\limits_{j,\gamma}Q^{(0)}_j,$ we see that the family $\{L_\gamma: \gamma\in \Gamma_N\}$ is bounded above.
\newline
This implies that the family $\{L_\gamma: \gamma\in \Gamma_N\}$ admits a maximum. Let us denote it by
\begin{align}
L^* = \max\limits_{\gamma\in \Gamma_N} L_\gamma
\end{align}
and therefore pick any maximizer $\gamma^*$ such that
\begin{align}
\gamma^* \in \arg \max\limits_{\gamma\in \Gamma_N} L_\gamma,
\end{align}
with $L_{\gamma^*}= L^*.$
\newline
Consequently, choosing any $\gamma^* \in \arg \max\limits_{\gamma\in \Gamma_N} L_\gamma$ provides us the best possible lower bound of the family $\{\varepsilon^{N(1-p)}\big(L_\gamma-(M-1)B\big): \gamma\in \Gamma_N\},$ which further implies
\begin{align}
\|u^{\textit{sc},N}(x,t)-u^{\textit{sc},N-1}(x,t)\|_{H^r_{0,\sigma}}
\ge \Big(L^* - (M-1)B\Big)\varepsilon^{N(1-p)+1}.
\end{align}
Next, we choose the parameters $m$ and $h$ corresponding to the resonance band in such a way that for $\theta \in (0,1)$
\begin{align}
   m\sqrt{2h}>(1-\theta)\big \Vert \mathbcal{F}(z_i,\cdot)\big\Vert_{H^r_{0,\sigma}} \dfrac{d_{\operatorname{max}}e^{\sigma\big(\tau_{\gamma^*}+c_0^{-1}(d_{\operatorname{max}}-d_{\operatorname{min}})\big)}}{d_{\operatorname{min}}Q^{(0)}_{\gamma^*} r_{\operatorname{min}}}\Big(\frac{C^{(0)}}{\widetilde{d}}\Big)^{N+1} \omega_M^{2N}\Big(\dfrac{(M-1)\sigma_0^2}{\omega_M^2\sigma_0^2-1}\Big)^{N+2},
\end{align}
which ensure 
\begin{align}
    L^* - (M-1)B>(1-\theta),
\end{align}
as well as the number of scatters satisfies the following bound
\begin{align}
M\leq M_{\max}=\Big\lfloor 1+\theta \frac{L^*}{B}\Big\rfloor.
\end{align}
We now recall the following estimate
\begin{align}\label{e}
    \|u^{\textit{sc}}(x,t)-u^{\textit{sc},N}(x,t)\|_{H^r_{0,\sigma}}
    &\nonumber \le
    \frac{M\,\mathbf{\mathrm{C}}^{(0)}}{4\pi\,d_{\operatorname{min}}} \varepsilon^{(N+1)(1-p)+1} e^{-\sigma c_0^{-1}d_{\operatorname{min}}}
    \frac{\alpha_\infty^{N+1}}{1-\alpha_\infty\varepsilon^{1-p}}\,
    \frac{\sigma_0^2}{\omega_M^2\sigma_0^2-1}\,\|\mathbcal{F}\|_{H^r_{0,\sigma}}
   \\ & +\mathcal{O}(M\varepsilon^2).
\end{align}
Then under the condition
\begin{align}
    p>1-\frac{1}{N},
\end{align}
we deduce the final estimate
\begin{align}
    \|u^{\textit{sc},N}(x,t)-u^{\textit{sc},N-1}(x,t)\|_{H^r_{0,\sigma}}
\ge\Big(L^* - (M-1)B\Big)\varepsilon^{N(1-p)+1} >> M\varepsilon^2.
\end{align}
This completes the proof. \qed

\section{Example: Air Bubbles in Water}
\noindent
We know that the natural frequency (Minnaert) of the air bubble created in the water is given by the following formula:
$$
f = \frac{1}{2\pi \varepsilon}\sqrt{\frac{3\gamma P_0}{\rho_c}}.
$$
Then $f\sim 3.26 \ \text{kHz}$. Here we consider $\gamma \sim 1.4$ (gas polytropic index), $P_0\sim 100\ \text{kPa}$ (ambient pressure), $\rho_c\sim 1000\ \text{kg/m}^3$ (density of the water), and $\varepsilon\sim 1\times 10^{-3}$ (radius of the bubble). Consequently, we consider the angular frequency as $\omega_{\operatorname{res}} = 2\pi f \sim 2.05\times 10^4\ \text{s}^{-1}$.
\newline
Then, we have $\omega_M = \frac{1}{\omega_{\operatorname{res}}} \sim 4.88\times 10^{-5}\ \text{s}^{-1}$. The speed of propagation outside the bubble is then given by $c_0 \sim 1480\ \text{m/s}$.
\newline
We now set $\sigma\sim 3\times 10^4\ \text{s}^{-1}$. We choose this to ensure that $\sigma>\frac{1}{\omega_M}$. Thus, we see
$$
\sigma^2\omega_M^2-1= 1.46^2-1= 2.14-1=1.14>0,
$$
which ensures the condition $\sigma^2\omega_M^2>1$. Therefore, we derive
$$
\frac{\sigma^2}{\sigma^2\omega_M^2-1} \sim 7.8\times 10^8\ \text{s}^{-2}.
$$
We now choose the lower bound of the Laplace magnitude of $V_{i_0}$ in the resonance band as $m=10$ (\ref{imcond}) and the bandwidth as $h=1\times 10^3\ \text{rad/s}$. Consequently, we obtain $m\sqrt{2h}\sim 444.721$.
\newline
We now fix the source at a distance $d_x \sim 0.1\ \text{m}$, i.e., very close to the cluster of bubbles.
\newline
We now fix $N=5$, $M=5$, and also consider $p=0.9$, which ensures the condition
$$
p>1-\frac{1}{N}=0.8.
$$
\newline
We now choose a path by $\gamma: 1\to 2\to 3\to 4\to 5\to 2$. Before proceeding, we know that for water $\rho_c=10^3\ \text{kg/m}^3$ and adiabatic air $\kappa_b\sim 1.4\times10^5\ \text{Pa}$. Now, due to the scaling property $\overline{\kappa}_{b,j} =\varepsilon^{-2}\kappa_{b,j}$, which further implies $\overline{\kappa}_{b,j} \sim 1.4\times 10^{11}\ \text{Pa}$. Now, if we consider the reference bubbles to be spherical with unit radius, then $\operatorname{vol}(B_j)=\frac{4}{3}\pi$. Thus, by recalling $C_j^{(0)} = \frac{\rho_c}{\overline{\kappa}_{b,j}}\operatorname{vol}(B_j) \sim 3\times 10^{-8}$.
\newline
We first consider that the centers of the bubbles are positioned as follows:
\begin{align*}
\text{Bubble centers (m):} \quad &
\mathbf{z}_1 = (0.00, 0.00, 0.00),\
\mathbf{z}_2 = (0.03, 0.00, 0.00),\
\mathbf{z}_3 = (0.03, 0.04, 0.00), \\
& \mathbf{z}_4 = (0.00, 0.05, 0.00),\
\mathbf{z}_5 = (-0.02, 0.02, 0.00).
\end{align*}
Now, as we consider the path $\gamma: 1\to 2\to 3\to 4\to 5\to 2$, we derive
\begin{align*}
\quad &
d_{12} = |\mathbf{z}_1 - \mathbf{z}_2| = 0.0300000000 \ \text{m} \\
& d_{23} = |\mathbf{z}_2 - \mathbf{z}_3| = 0.0400000000 \ \text{m} \\
& d_{34} = |\mathbf{z}_3 - \mathbf{z}_4| = \sqrt{0.03^2 + 0.01^2} = 0.0316227766 \ \text{m} \\
& d_{45} = |\mathbf{z}_4 - \mathbf{z}_5| = \sqrt{0.02^2 + 0.03^2} = 0.0360555128 \ \text{m} \\
& d_{52} = |\mathbf{z}_5 - \mathbf{z}_2| = \sqrt{0.05^2 + 0.02^2} = 0.0538516481 \ \text{m}.
\end{align*}
As we have the following asymptotic relation $|z_i-z_j| = d_{ij}^{(0)}\varepsilon^p$, we derive that
\begin{align*}
\quad & d_{12}^{(0)} = 14.0961839670,\ d_{23}^{(0)} = 18.7949119560,\ d_{34}^{(0)} = 14.8586825509, \\
& d_{45}^{(0)} = 16.9415046938,\ d_{52}^{(0)} = 25.3034246047,
\end{align*}
and correspondingly for $\tau_{ij} = c_0^{-1}|z_i-z_j|$ with $c_0=1480\ \text{m/s}$:
\begin{align*}
  \quad & \tau_{12} = 2.0270270270\times 10^{-5},\ \tau_{23} = 2.7027027027\times 10^{-5},\ \tau_{34} = 2.1366740947\times 10^{-5}, \\
& \tau_{45} = 2.4361832942\times 10^{-5},\ \tau_{52} = 3.6386248697\times 10^{-5};\quad \tau_{\gamma} = 1.2941211988\times 10^{-4}.
\end{align*}
We also recall that $q_{ij}=\varepsilon\frac{C_{j}^{(0)}}{4\pi d_{ij}} \approx \varepsilon^{1-p}\frac{C_{j}^{(0)}}{4\pi d^{(0)}_{ij}}$, where we set $q^{(0)}_{ij} \approx \frac{C_{j}^{(0)}}{4\pi d^{(0)}_{ij}}$. Therefore, we obtain that
\begin{align*}
\quad & q_{12}^{(0)} = 1.68907584246\times 10^{-10},\ q_{23}^{(0)} = 1.26680688184\times 10^{-10}, \\
& q_{34}^{(0)} = 1.60239804088\times 10^{-10},\ q_{45}^{(0)} = 1.40539605187\times 10^{-10},\ q_{52}^{(0)} = 9.40960529315\times 10^{-11},
\end{align*}
and consequently, $\tau_\gamma \sim 1.29\times 10^{-4}\ \text{s}$ and $Q_{\gamma}^{(0)} = 4.53419407817\times 10^{-50}$.
\newline
We now recall that (for simplicity we consider $r=0$ and $e^{-\sigma \tau_\gamma} \to 1$ as $\varepsilon\to 0$)
\begin{align}
    L_\gamma
    \nonumber&:= \Bigg( \frac{Q^{(0)}_\gamma C^{(0)}_{i_N}}{4\pi d_x\omega_M^{2N}}\Bigg)\ e^{-\sigma c_0^{-1}d_x} m \sqrt{2h}
    \\ &\nonumber= \frac{(4.53\times 10^{-50})\cdot (3\times 10^{-8})}{4\pi \cdot 0.1 \cdot (4.88\times 10^{-5})^{10}} e^{-\frac{(3\times 10^4)\cdot 0.1}{1480}}\cdot 444.72 \sim 8.28\times 10^{-13}.
\end{align}
We now recall
\begin{align}
    B
    \nonumber &:= \frac{C_j^{(0)}}{4\pi d_x}\frac{\sigma^2}{\omega_M^2\sigma^2-1}\ \alpha_\infty^{N+1}\ e^{-\sigma c_0^{-1}d_{\bm{x}}}\|\mathcal{F}\|_{H_{0,\sigma}^0}
    \\ &= \frac{3\times 10^{-8}}{4\pi\cdot 0.1}\cdot 7.8\times 10^8\cdot 0.13 \sim 0.32 \times \|\mathcal{F}\|_{H_{0,\sigma}^0}.
\end{align}
We now choose the overall source amplitude so that $$\|\mathcal{F}\|_{H_{0,\sigma}^0} \sim 5.59\times 10^{-13},$$
which further implies
$$M_{\operatorname{max}} = \big\lfloor 1+ 0.9 \cdot 4.6\big\rfloor =5.$$

\begin{remark}
\begin{enumerate}
    \item We remark that a "small" $\|\mathcal{F}\|_{H_{0,\sigma}^0}$-norm does not imply that the incident wave is vanishing; rather, it reflects a Laplace-weighted, second-derivative norm, not the amplitude.
\newline
Let us try to understand this fact with an example. Let us consider
    $$u^\textit{in} := A\sin(\omega_0 t)\bm{1}_{\ge 0}\implies F(t)=\partial_t^2u^\textit{in}(t)=-A\omega_0^2\sin(\omega_0 t)\bm{1}_{\ge 0}.$$
    Moreover, we can deduce the following:
    \begin{align}
      \|\mathcal{F}\|^2_{H_{0,\sigma}^0}
      \nonumber &= A^2\omega_0^4\int_{0}^\infty e^{-2\sigma t}\sin^2(\omega_0 t)\,dt
      \\ &= \frac{A^2\omega_0^4}{4}\Big(\frac{1}{\sigma}-\frac{\sigma}{\sigma^2+\omega_0^2}\Big).
    \end{align}
    Thus, we observe that for a fixed amplitude $A$, if $\sigma$ is large (which is the case here), $ \|\mathcal{F}\|_{H_{0,\sigma}^0} = \frac{A\omega_0^2}{2}\sqrt{\Big(\frac{1}{\sigma}-\frac{\sigma}{\sigma^2+\omega_0^2}\Big)}$ becomes small. Even if the wave is slowly varying, i.e., when $\omega_0\ll \sigma$, $\|\mathcal{F}\|_{H_{0,\sigma}^r} \sim \frac{A\omega_0^3}{2\sigma^{3/2}}$ also becomes small. 
    \item In addition, we recall that $R_{\omega_M^2}(s) = \frac{s^2}{\omega_M^2 s^2 + 1}$. It is easy to observe that at frequencies $\omega \sim \omega_{\operatorname{res}}$,
$$
|R_{\omega_M^2}(\sigma + i\omega)| \text{ is large, roughly } \sim \frac{\omega_{\operatorname{res}}^2}{2\sigma}.
$$
Next, we consider that $\widehat{F}$ has support only in the resonance band, and we define
$$
\widehat{F}(\sigma + i\omega) = \frac{m}{R_{\omega_M^2}(\sigma + i\omega)} \phi(\omega),
$$
where $\phi(\omega)$ is a smooth function with $0 \le \phi \le 1$, and $\phi \equiv 1$ on $[\omega_{\operatorname{res}} - h/2, \omega_{\operatorname{res}} + h/2]$, with support in $\Omega_{\operatorname{res}}$. Moreover,
$$
\widehat{V}(\sigma + i\omega) = R_{\omega_M^2}(\sigma + i\omega) \widehat{F}(\sigma + i\omega) = m \phi(\omega),
$$
which further implies
\begin{align}
    \|\mathcal{F}\|^2_{H_{0,\sigma}^0}
    &\nonumber= \int_{\Omega_{\operatorname{res}}} \frac{m^2}{|R_{\omega_M^2}(\sigma + i\omega)|^2} |\phi(\omega)|^2 \, d\omega
    \\ &\le \operatorname{vol}(\Omega_{\operatorname{res}}) \frac{m^2}{\inf\limits_{\omega \in \Omega_{\operatorname{res}}} |R_{\omega_M^2}(\sigma + i\omega)|^2} \sim m^2 2h \left( \frac{\sigma}{\omega_{\operatorname{res}}^3} \right)^2.
\end{align}
This shows that even though $\|\mathcal{F}\|_{H_{0,\sigma}^0}$ is ``small'', $\widehat{V}$ has a point-wise lower bound of $m$ across the resonance band.
\end{enumerate}
    
\end{remark}

\section{Appendix}\label{appen}
\begin{lemma}
Let $K$ be the matrix-valued distribution with entries  
\begin{align}\label{matrix K}
K_{ij}(t)
= q_{ij}\Big(\frac{1}{\omega_M^2}\delta(t-\tau_{ij})
-\frac{1}{\omega_M^3}\sin\!\big(\tfrac{t-\tau_{ij}}{\omega_M}\big)\mathbf{1}_{t\ge\tau_{ij}}\Big).
\end{align}
This consists of an atomic Dirac part and a continuous part. Let us set $K^{(d)}_{ij}(t):=\frac{q_{ij}}{\omega_M^2}\delta(t-\tau_{ij})$ and $K^{(c)}_{ij}(t) := K_{ij}(t)-K^{(d)}_{ij}(t).$
For $N\ge 1$ and $i,j=1,2,\ldots,M,$ we have the following equality in the distributional sense
    \begin{align}\label{kd}
         \big(K^{(d)}\big)^{*N}_{ij}(t) 
         = \frac{1}{\omega_M^{2N}}\sum\limits_{\substack{\gamma=(i_0,i_1,\ldots,i_N) \\i_0=i,i_N=j}} \Bigg(\prod\limits_{k=0}^{N-1} q_{i_k,i_{k+1}}\Bigg) \delta \Bigg(t-\sum_{k=0}^{N-1}\tau_{i_k i_{k+1}}\Bigg).
    \end{align} 
    Here, $\gamma$ denotes a path of length $N$ from $i$ to $j$.
\end{lemma}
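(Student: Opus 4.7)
The plan is to prove the identity by induction on $N$, exploiting the fact that the convolution of two Dirac masses obeys the shift rule $\delta(\cdot-a) * \delta(\cdot-b) = \delta(\cdot-a-b)$ in $\mathcal{D}'(\mathbb{R})$, which works cleanly because all the shifts $\tau_{ij}\ge 0$ are nonnegative and the distributions are supported in $[0,\infty)$ (so convolution is well defined on this class).

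For the base case $N=1$ the claim reduces to the definition
\begin{align*}
\big(K^{(d)}\big)^{*1}_{ij}(t) = K^{(d)}_{ij}(t) = \frac{q_{ij}}{\omega_M^{2}}\,\delta(t-\tau_{ij}),
\end{align*}
which matches the right-hand side for $N=1$, since the only path of length $1$ from $i$ to $j$ is $\gamma=(i,j)$ and in that case $\prod_{k=0}^{0} q_{i_k,i_{k+1}} = q_{ij}$ and $\sum_{k=0}^{0}\tau_{i_k i_{k+1}}=\tau_{ij}$.

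For the inductive step, assume the formula holds at level $N$. By the definition of matrix convolution powers and the bilinearity of convolution we write
\begin{align*}
\big(K^{(d)}\big)^{*(N+1)}_{ij}(t)
= \sum_{\ell=1}^{M} \Big(\big(K^{(d)}\big)^{*N}_{i\ell} * K^{(d)}_{\ell j}\Big)(t).
\end{align*}
Plugging the induction hypothesis into the first factor and $K^{(d)}_{\ell j}(t)=\frac{q_{\ell j}}{\omega_M^{2}}\delta(t-\tau_{\ell j})$ into the second, then applying $\delta_{a}*\delta_{b}=\delta_{a+b}$ termwise (valid since the sums of shifts are finite and nonnegative, so all convolutions are legitimate operations in $\mathcal{D}'([0,\infty))$), each path $(i_0,i_1,\ldots,i_N)$ with $i_0=i,\ i_N=\ell$ gets extended by appending $i_{N+1}=j$. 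This produces exactly the set of paths of length $N+1$ from $i$ to $j$, with coupling constant $\prod_{k=0}^{N-1}q_{i_k,i_{k+1}}\cdot q_{\ell j} = \prod_{k=0}^{N}q_{i_k,i_{k+1}}$ and accumulated delay $\sum_{k=0}^{N-1}\tau_{i_k i_{k+1}}+\tau_{\ell j}=\sum_{k=0}^{N}\tau_{i_k i_{k+1}}$. Summing over $\ell$ realizes the sum over all extended paths, giving the claim at level $N+1$.

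The only genuinely delicate point is ensuring that the convolutions are meaningful distributions (not just formal expressions): this is handled by noting that every $\delta(\cdot-\tau_{ij})$ is a finite Borel measure supported in $[0,\infty)$, hence by Proposition~\ref{prop1} the convolution powers live in $(\mathcal{B}([0,T])\oplus L^{1}([0,T]))^{M\times M}$ and the shift rule for Dirac masses applies in the distributional sense. Alternatively, one can verify the identity by taking the Laplace transform of both sides: the left-hand side yields $(\widehat{K^{(d)}}(s))^{N}_{ij}$ with $\widehat{K^{(d)}}_{ij}(s)=\frac{q_{ij}}{\omega_M^{2}}e^{-s\tau_{ij}}$, expanding the matrix product as a sum over index paths reproduces $\frac{1}{\omega_M^{2N}}\sum_{\gamma}\big(\prod_k q_{i_k,i_{k+1}}\big)e^{-s\sum_k\tau_{i_k i_{k+1}}}$, which is exactly the Laplace transform of the right-hand side; invertibility of $\Laplace$ on the relevant distribution class then concludes. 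This Laplace-side verification is essentially bookkeeping and is the cleanest way to finish without worrying about distributional subtleties.
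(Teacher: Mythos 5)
Your proof is correct and follows essentially the same route as the paper's Appendix argument: induction on $N$ with the componentwise matrix convolution $\big(f*g\big)_{ij}=\sum_{\ell}f_{i\ell}*g_{\ell j}$ and the Dirac shift rule $\delta(\cdot-a)*\delta(\cdot-b)=\delta(\cdot-a-b)$, extending each length-$N$ path ending at $\ell$ by the step $\ell\to j$. The closing remark on verifying the identity via Laplace transforms is a sound (and not strictly needed) supplement that the paper does not include.
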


\begin{proof}
    We prove the claim by induction on $N$. Before proceeding, we define the convolution of two $M\times M$ matrix-valued causal kernels $f$ and $g$ componentwise by
    \[
    \big(f*g\big)_{ij}(t)=\sum_{\ell=1}^M f_{i\ell}*g_{\ell j}(t)\; \text{in}\ \mathcal{D}'(0,\infty),
    \]
    where $f=[f_{ij}]$ and $g=[g_{ij}]$.
    \newline
    It is straightforward to observe that 
    \begin{align*}
        \big(K^{(d)}\big)^{*1}_{ij}(t) 
        &\nonumber=  K^{(d)}_{ij}
        \\ &= \frac{q_{ij}}{\omega_M^2}\delta(t-\tau_{ij}).
    \end{align*}
    This follows from the definition. Thus, the statement holds for $N=1$, i.e., for a path of length $1$ given by $\gamma=(i_0=i,i_1=j)$.
    \newline
    Now assume that the equality holds for $N$ in $\mathcal{D}'(0,\infty)$. Then,
    \begin{align}
        \big(K^{(d)}\big)^{*(N+1)}_{ij}(t) 
       \nonumber &= \sum_{\ell=1}^M\big(K^{(d)}\big)^{*N}_{i\ell}*K^{(d)}_{\ell j}(t)
       \\ \nonumber&= \sum_{\ell=1}^M\Bigg[\frac{1}{\omega_M^{2N}}\sum\limits_{\substack{\gamma=(i_0,i_1,\ldots,i_N) \\i_0=i,i_N=\ell}} \Bigg(\prod\limits_{k=0}^{N-1} q_{i_k,i_{k+1}}\Bigg) \delta \Big(t-\tau-\sum_{k=0}^{N-1}\tau_{i_k i_{k+1}}\Big)\Bigg]*\Big[\frac{q_{\ell j}}{\omega_M^2}\delta(\tau-\tau_{\ell j})\Big]
       \\ \nonumber&= \frac{1}{\omega_M^{2(N+1)}}\sum_{\ell=1}^M \sum\limits_{\substack{\gamma=(i_0,i_1,\ldots,i_N) \\i_0=i,i_N=\ell}} \Bigg(\prod\limits_{k=0}^{N-1} q_{i_k,i_{k+1}}\Bigg) q_{\ell j}\Bigg[ \delta \Big(t-\sum_{k=0}^{N-1}\tau_{i_k i_{k+1}}\Big)* \delta(t-\tau_{\ell j}\Big)\Bigg]
       \\ \nonumber&= \frac{1}{\omega_M^{2(N+1)}}\sum_{\ell=1}^M \sum\limits_{\substack{\gamma=(i_0,i_1,\ldots,i_N) \\i_0=i,i_N=\ell}} \Bigg(\prod\limits_{k=0}^{N-1} q_{i_k,i_{k+1}}\Bigg) q_{\ell j}\Bigg[ \delta \Big(t-\sum_{k=0}^{N-1}\tau_{i_k i_{k+1}}-\tau_{\ell j}\Big)\Bigg].
    \end{align}
    The path $\gamma = (i_0=i,i_1,\ldots,i_N=\ell)$ of length $N$, extended by $\ell\to j$, gives the path $\widetilde{\gamma} := (i_0=i,i_1,\ldots,i_N=\ell, i_{N+1}=j)$ of length $N+1$. Thus, we obtain
    \begin{align}
        \big(K^{(d)}\big)^{*(N+1)}_{ij}(t)  = \frac{1}{\omega_M^{2(N+1)}}\sum\limits_{\substack{\widetilde{\gamma}=(i_0,i_1,\ldots,i_{N+1}) \\i_0=i,i_{N+1}=j}} \Bigg(\prod\limits_{k=0}^{N} q_{i_k,i_{k+1}}\Bigg) \delta \Bigg(t-\sum_{k=0}^{N}\tau_{i_k i_{k+1}}\Bigg).
    \end{align}
    This completes the proof by induction.
\end{proof}

\end{document}